\documentclass[a4paper,hidelinks,onefignum,onetabnum]{siamart220329}

\usepackage{stmaryrd}
\SetSymbolFont{stmry}{bold}{U}{stmry}{m}{n}
\usepackage{amsmath,bbm,mathbbol,mathtools}
\usepackage{amssymb,amsfonts}
\usepackage{tabularx,lmodern}
\usepackage{color,soul,enumitem}
\usepackage{epsfig,epstopdf,color}
\usepackage{hyperref,graphics}
\usepackage{multirow}
\usepackage{algpseudocode}
\usepackage{flushend}
\usepackage{verbatim}
\usepackage{caption}
\usepackage{graphicx}
\usepackage{arydshln}
\usepackage[level]{datetime}
\usepackage{float}
\usepackage{booktabs}
\usepackage{tabularx}
\usepackage{relsize}
\usepackage{ulem}
\usepackage{setspace}
\usepackage{epstopdf}
\usepackage{appendix}
\usepackage{bm}
\usepackage{listings}
\usepackage{subfigure}
\usepackage{parskip}
\usepackage{makecell}

\PassOptionsToPackage{ruled,nofillcomment,shortend}{algorithm2e}
\usepackage{algorithm2e}

\usepackage[T1]{fontenc}
\SetKwFor{For}{for}{do}{endfor for}%

\newcommand{\pb} { ~\underline{\perp}~}

\usepackage{color}
\usepackage{ulem}

\newcommand{\comm}[1]{{\color{black}#1}} 
\newcommand{\revise}[1]{{\color{black}#1}} 

\newsiamremark{remark}{Remark}
\newsiamremark{example}{Example}
\algdef{SE}[DOWHILE]{Do}{doWhile}{\algorithmicdo}[1]{\algorithmicwhile\ #1}%

\allowdisplaybreaks[4]
\synctex=1
\setlength{\parindent}{2.0em}

\begin{document}

\title{A Bi-Orthogonal Structure-Preserving eigensolver \\ 
for large-scale linear response eigenvalue problem}

\author{
   Yu Li \footnotemark[2]
   \and 
   Zijing Wang \footnotemark[3]
   \and 
Yong Zhang\footnotemark[4] \footnotemark[5]
}

\renewcommand{\thefootnote}{\fnsymbol{footnote}}
\footnotetext[1]{This work was partially supported by the National Natural Science Foundations of China under No.12271395 (Yu Li) and No.12271400 (Yong Zhang).
Zijing Wang was partially supported by 
the Strategic Priority Research Program of the Chinese Academy of Sciences (XDB0640000, XDB0640300, XDB0620203), 
National Natural Science Foundations of China (No.1233000214), 
Science Challenge Project (TZ2024009), 
State Key Laboratory of Mathematical Sciences, CAS.}
\footnotetext[2]{Coordinated Innovation Center for Computable Modeling in Management Science, Tianjin University of Finance and Economics, Tianjin, 300222, China.
   (\email{liyu@tjufe.edu.cn}).}
\footnotetext[3]{State Key Laboratory of Mathematical Sciences, Academy of Mathematics and Systems Science, Chinese Academy of Sciences, Beijing, 100190, School of Mathematical Sciences, University of Chinese Academy of Sciences, Beijing, 100049, China. 
  (\email{zjwang@lsec.cc.ac.cn}).}
\footnotetext[4]{Center for Applied Mathematics and KL-AAGDM, Tianjin University, Tianjin, 300072, China. 
  (\email{Zhang\_Yong@tju.edu.cn}).}
\footnotetext[5]{Corresponding author.}

\ifpdf
\hypersetup{
  pdftitle={A Bi-Orthogonal Structure-Preserving eigensolver \\ 
  for large-scale linear response eigenvalue problem},
  pdfauthor={Y. Li, Z. J. Wang, and Y. Zhang}
}
\fi

\pagestyle{myheadings}\thispagestyle{plain}
\markboth{Y. Li, Z. Wang and Y. Zhang}
{Bi-Orthogonal Structure-Preserving Eigensolver}

\maketitle

\begin{abstract}
The linear response eigenvalue problem, which arises from many scientific and engineering fields, is quite challenging numerically 
for large-scale sparse/dense system, 
especially when it has zero eigenvalues.
Based on a direct sum decomposition of biorthogonal invariant subspaces
and the minimization principles in the biorthogonal complement, 
using the structure of generalized nullspace, 
we propose a Bi-Orthogonal Structure-Preserving subspace iterative solver, which is stable, efficient, and of excellent parallel scalability. 
The biorthogonality is of essential importance and created by a modified Gram-Schmidt biorthogonalization (MGS-Biorth) algorithm.
We naturally deflate out converged eigenvectors by computing the rest eigenpairs in the biorthogonal complementary subspace 
without introducing any artificial parameters. 
When the number of requested eigenpairs is large, we propose a moving mechanism to compute 
them {\sl batch by batch} such that the projection matrix size is small and independent of the requested eigenpair number. 
For large-scale problems, one only needs to provide the matrix-vector product, thus waiving explicit matrix storage.
The numerical performance is further improved when the matrix-vector product is implemented using parallel computing.
Ample numerical examples are provided to demonstrate the stability, efficiency, and parallel scalability.

\end{abstract}

\begin{keyword}
linear response eigenvalue problem, biorthogonal invariant subspace, modified Gram-Schmidt biorthogonalization algorithm, generalized nullspace, parallel computing
\end{keyword}

\section{Introduction}

In quantum physics and chemistry, the random phase approximation (RPA) is a theoretical framework used to 
elucidate the excitation states (energies) of physical systems 
during the investigation of collective motion in many-particle systems 
\cite{Ring2004Nuclear,Thouless1961Vibrational,Thouless1972Quantum}. For example, 
the characterization of excitation states and energies is frequently articulated through the utilization 
of the Bethe-Salpeter equation (BSE) 
\cite{Salpeter1951relativistic,Strinati1988Application}, and 
the Bogoliubov-de Gennes equations (BdG) are commonly employed for the description of 
collective motion and excitations \cite{Blase2020Bethe,Casida1995Time,Onida2002Electronic}. 
An essential problem within the realm of RPA pertains to 
the computation of 
the first few smallest positive eigenvalues and the corresponding eigenvectors
of the following eigenvalue problem
\begin{equation}\label{equ:bse}
	\begin{bmatrix}
		A & B \\ -B & -A
	\end{bmatrix}
	\begin{bmatrix}
		u \\ v
	\end{bmatrix}
	=\lambda 
	\begin{bmatrix}
		u \\ v
	\end{bmatrix},
	\end{equation}
	where $A, B\in\mathbb{R}^{n\times n}$ are both symmetric matrices and $u, v\in \mathbb R^{n}$ are column vectors.
By performing a change of variables
$u = y + x$, $v = y-x$,
the eigenvalue problem \eqref{equ:bse} can be transformed equivalently into
the linear response eigenvalue problem (LREP)
\begin{equation}\label{equ:xy_algebra}
	H
	\begin{bmatrix}
		y \\ x
	\end{bmatrix}
	=\lambda 
	\begin{bmatrix}
		y \\ x
	\end{bmatrix}
	\qquad\mbox{with }
	H = 
	\begin{bmatrix}
		O & K \\ M & O
	\end{bmatrix},
\end{equation}
where $K=A-B$ and $M=A+B$. 
In many physical problems \cite{Blase2020Bethe,Casida1995Time,Onida2002Electronic}, one of these two matrices is positive semi-definite.
Without loss of generality, we assume that matrix $K$ is symmetric positive semi-definite (SPSD)
and the other matrix $M$ is symmetric positive definite (SPD). 

Many methods have been proposed to compute LREP \eqref{equ:xy_algebra}. 
Bai et al. present minimization principles 
and develop 
a locally optimal block preconditioned 4-d conjugate gradient (LOBP4dCG) method
based on a structure-preserving subspace projection
\cite{Bai2012Minimization,Bai2013Minimization,Bai2014Minimization,Bai2016Linear,Rocca2012block}.
Lanczos-type algorithm, 
along with its variations and extensions \cite{Brabec2015Efficient,Shao2018structure,Teng2013Convergence,Teng2017block}
and the block Chebyshev-Davidson method \cite{Teng2016block} are also employed.
Meanwhile,
this nonsymmetric eigenvalue problem can be 
transformed into an eigenvalue problem that
involves the product of two matrices $K$ and $M$. 
Because $KM$ is self-adjoint with respect to the inner product induced by SPD matrix $M$,
defined as $x^{\top}My$ for $x,\ y\in\mathbb{R}^n$,
this product eigenvalue problem can be solved by
a modified Davidson algorithm and a modified locally optimal block preconditioned conjugate gradient (LOBPCG) algorithm
\cite{Vecharynski2017Efficient}.
A FEAST algorithm based on complex contour integration
for the linear response eigenvalue problem is presented in 
\cite{Teng2019feast}.
For dense structured LREP \eqref{equ:xy_algebra}, 
Shao and Benner et al. propose efficient
structure-preserving parallel algorithms with ScaLAPACK \cite{Benner2022efficientbse,Shao2016Structure}.

For large-scale system, 
computing eigenpairs all at once is extremely challenging 
in terms of memory requirement, complexity and notorious slow convergence or even divergence.
The deflation mechanism is introduced to alleviate such difficulties by deducting converged eigenvectors from subsequent search subspace 
so that the following computation is not affected by converged eigenpairs.
In \cite{Bai2016Linear}, the deflation mechanism is implemented by 
a low-rank update to $K$ with parameter $\xi$,
which is determined using \textit{a priori} spectral distribution of $H$.
While, in this article, we deflate converged eigenpairs by using the inherent biorthogonal structure 
of eigenspaces associated with matrix $H$, waiving any artificial parameters.

To be specific, eigenvectors of $H$ associated with eigenvalues of different magnitudes, 
denoted as 
$\begin{bsmallmatrix} y_i \\ x_i \end{bsmallmatrix}$
with $i=1,2$, are biorthogonal to each other, i.e., $x_1^{\top}y_2=y_1^{\top}x_2=0$.
Therefore, it is possible to decompose the finite-dimension space $\mathbb{R}^{2n}$ into a direct sum of biorthogonal invariant subspaces, 
an analog of the orthogonal direct sum decomposition for real symmetric eigenvalue problems \cite{Li2023GCGE,Li2020parallel,Zhang2020generalized}.
Preserving such biorthogonality on the discrete level is very important to guarantee convergence.
With help of the stable biorthogonalization procedure, namely modified Gram-Schmidt biorthogonalization algorithm (MGS-Biorth), 
biorthogonality is created numerically and convergence is reached within about a few iterations.

Such biorthogonality provides a natural mechanism to deflate out converged eigenvectors, 
including those lying in the generalized nullspace,
by minimizing the trace functional in their biorthogonal complementary subspace. 
Thus, no artificial parameters are introduced in our deflation mechanism.
The biorthogonal structure is preserved well throughout the algorithm, therefore, we name it as Bi-Orthogonal Structure-Preserving solver (BOSP for short).

As for the biorthogonalization algorithm, the earliest literature dates back to 1950 \cite{Lanczos1950iteration}, 
where it was initially developed for solving eigenvalue problems by Lanczos.
Later, the Lanczos biorthogonalization algorithm, an extension of the symmetric Lanczos algorithm for nonsymmetric matrices,
has been extensively studied and applied to linear system and eigenvalue problems, and we refer to 
\cite{Parlett1985look,Saad1982Lanczos,Saad2003Iterative,Saad2011Numerical} for an incomplete list. 
In this paper, we utilize MGS-Biorth algorithm due to its excellent performance in suppressing rounding errors, 
and it is more stable than the classical Gram-Schmidt biorthogonalization algorithm (CGS-Biorth). 

Spatial discretization of high-dimension partial differential equation problems often leads to large-scale sparse or dense problems,
\comm{for example, the finite element methods or spectral methods}, 
and the degrees of freedom might easily soar to millions or billions. Under such circumstances, 
explicit matrix storage is a very challenging task even for modern hardware architecture, therefore, it is imperative 
to develop a subspace iteration eigensolver where only the matrix-vector product is required. 
To this end, we design an interactive interface to help users provide matrix-vector product implementation, 
thus eliminating the need for any explicit matrix storage. 
The performance will be further elevated if the matrix-vector product is
numerical accessed with better efficiency, for example, using CPU or GPU parallel implementation.

In real applications, the number of requested eigenpairs, denoted as $n_{e}$, might be very large, 
for example, as large as hundreds or thousands in quantum physics \cite{Martin2021Stability,Shao2016Structure}.
\comm{We propose a novel moving mechanism to compute the eigenpairs 
  \textit{batch by batch}, where the number of eigenpairs in each batch, denoted as $n_b$, is smaller than $n_e$ and set as 
   $n_b =  \min\{n_{e}/5,150\}$ by default.}
Such strategy allows for incremental and sequential extraction of eigenpairs in batches,
thus, the projection matrix size is small, and, more importantly, 
independent of either degrees of freedom or the requested eigenpairs number.
Therefore, for large $n_e$ case, the memory requirement is greatly reduced and the overall efficiency is improved substantially.
Similar mechanism has been applied to large-scale symmetric eigenvalue problems, and we refer the readers to 
\cite{Hernandez2005SLEPc,Li2023GCGE,Li2020parallel,Zhang2020generalized} for more details.


The rest of the paper is organized as follows. 
In Section \ref{sec:background}, we introduce the basic theories of LREP and the biorthogonality of eigenvectors.
In Section \ref{sec:biorth_gcg}, we present MGS-Biorth algorithm and
the BOSP algorithm, together with various modern techniques to optimize its numerical performance. 
In Section \ref{sec:numerical}, 
we conduct comprehensive numerical tests to showcase the stability, efficiency, and parallel scalability.
Some concluding remarks are given in the last section.

\section{Background}\label{sec:background}

\comm{
In this section, we first recite some basic theoretical results on LREP \eqref{equ:xy_algebra},
which are also presented in \cite{Bai2012Minimization,Bai2013Minimization,Bai2016Linear}. 
Then,
a direct sum decomposition of biorthogonal invariant subspaces
for $\mathbb{R}^{2n}$
and the minimization principles in the biorthogonal complement
are given, 
which serve as the cornerstone of our algorithm.}

\subsection{Basic theory}\label{sec:biorth_property}

\begin{lemma}[Symmetric real eigenvalue distribution]
   \label{lem:eigReal}
	 Let $\lambda$ be an eigenvalue of $H$ in LREP \eqref{equ:xy_algebra}, 
	 then 
	 \begin{itemize}
		 \item[\rm{(i)}] The eigenvalue $\lambda$ is a real number, i.e., $\lambda \in \mathbb{R}$.
		 \item[\rm{(ii)}] There exists a real eigenvector $\begin{bsmallmatrix}y \\ x\end{bsmallmatrix} \in \mathbb{R}^{2n}$ associated with the eigenvalue $\lambda$.
		 \item[\rm{(iii)}] 
			 $\left\{-\lambda; \begin{bsmallmatrix}-y\\~x\end{bsmallmatrix}\right\}$ 
			 is also an eigenpair of $H$.
 \item[\rm{(iv)}] 
   ${\rm rank}(\lambda I-H)
 =n+{\rm rank}(\lambda^2 I-KM)$.
   \end{itemize}
\end{lemma}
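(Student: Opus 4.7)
The plan is to handle the four parts in the order \textrm{(iii)} $\to$ \textrm{(iv)} $\to$ \textrm{(i)} $\to$ \textrm{(ii)}, since \textrm{(iii)} is purely algebraic, \textrm{(iv)} provides a structural reduction that identifies the spectrum of $H$ with the square-roots of the spectrum of $KM$, and \textrm{(i)}–\textrm{(ii)} then follow quickly from the fact that $KM$ is similar to a genuine SPSD matrix.

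For \textrm{(iii)}, I would just expand the eigenvalue equation blockwise: $H\begin{bsmallmatrix}y\\x\end{bsmallmatrix}=\lambda\begin{bsmallmatrix}y\\x\end{bsmallmatrix}$ gives $Kx=\lambda y$ and $My=\lambda x$, and substituting into $H\begin{bsmallmatrix}-y\\\phantom{-}x\end{bsmallmatrix}=\begin{bsmallmatrix}Kx\\-My\end{bsmallmatrix}=\begin{bsmallmatrix}\lambda y\\-\lambda x\end{bsmallmatrix}=-\lambda\begin{bsmallmatrix}-y\\\phantom{-}x\end{bsmallmatrix}$ closes the claim. No subtleties here.

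For \textrm{(iv)}, I would perform block column operations on $\lambda I-H=\begin{bsmallmatrix}\lambda I & -K\\-M & \lambda I\end{bsmallmatrix}$. When $\lambda\neq 0$, right-multiplication by the invertible block matrix $\begin{bsmallmatrix}I & \lambda^{-1}K\\0 & I\end{bsmallmatrix}$ eliminates the $(1,2)$ block and yields the block lower triangular form $\begin{bsmallmatrix}\lambda I & 0\\-M & \lambda I-\lambda^{-1}MK\end{bsmallmatrix}$. Its rank is $n+\mathrm{rank}(\lambda I-\lambda^{-1}MK)=n+\mathrm{rank}(\lambda^2 I-MK)$ (the latter by rescaling by the nonzero scalar $\lambda$), and since $KM=M^{-1}(MK)M$ the matrices $MK$ and $KM$ are similar, so $\mathrm{rank}(\lambda^2 I-MK)=\mathrm{rank}(\lambda^2 I-KM)$. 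The case $\lambda=0$ needs a separate argument because the block-triangularization uses $\lambda^{-1}$: here I would just use the invertibility of $M$, noting $\mathrm{rank}(H)=\mathrm{rank}(K)+\mathrm{rank}(M)=n+\mathrm{rank}(K)$ and $\mathrm{rank}(KM)=\mathrm{rank}(K)$, so the formula still holds. I expect this $\lambda=0$ bookkeeping to be the only mildly delicate point in the whole lemma.

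For \textrm{(i)} and \textrm{(ii)}, I would combine $Kx=\lambda y$ and $My=\lambda x$ into $KMy=\lambda^2 y$. Because $M$ is SPD and $K$ is SPSD, $KM=M^{-1/2}(M^{1/2}KM^{1/2})M^{1/2}$ is similar to the real symmetric PSD matrix $M^{1/2}KM^{1/2}$, hence $KM$ has only nonnegative real eigenvalues. Therefore $\lambda^2\ge 0$ and $\lambda\in\mathbb{R}$, giving \textrm{(i)}. For \textrm{(ii)}, pick a real eigenvector $w$ of the symmetric matrix $M^{1/2}KM^{1/2}$ for $\lambda^2$, set $y=M^{-1/2}w$ (a real eigenvector of $KM$), and reconstruct $x=\lambda^{-1}My$ when $\lambda\neq 0$; direct verification shows $\begin{bsmallmatrix}y\\x\end{bsmallmatrix}$ is a real eigenvector of $H$ with eigenvalue $\lambda$. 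For $\lambda=0$, take $y=0$ and any nonzero $x\in\mathrm{null}(K)$ (which is nontrivial precisely when $0$ is an eigenvalue of $H$, i.e.\ when $K$ is singular), and observe that $H\begin{bsmallmatrix}0\\x\end{bsmallmatrix}=\begin{bsmallmatrix}Kx\\0\end{bsmallmatrix}=0$.
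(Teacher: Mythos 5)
Your proposal is correct and lands on the same essential ideas as the paper: reduce the spectral question about $H$ to one about $KM$, and then use the similarity $KM = M^{-1/2}\bigl(M^{1/2}KM^{1/2}\bigr)M^{1/2}$ to an SPSD matrix to conclude $\lambda^2 \ge 0$. The paper reaches the reduction via the determinant identity $\det(\lambda I - H) = \det(\lambda^2 I - KM)$, whereas you reach it by directly combining $Kx = \lambda y$ and $My = \lambda x$ into $KMy = \lambda^2 y$; these are two faces of the same fact, though your route implicitly needs the small observation that $y \neq 0$ whenever $\lambda \neq 0$ (if $y = 0$ and $\lambda \neq 0$, then $My = \lambda x$ forces $x = 0$, contradiction), which the determinant route sidesteps. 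You also supply genuine content where the paper simply declares things easy: the paper says ``it is straightforward to verify (iv)'' with no argument, while your block elimination for $\lambda \neq 0$ plus the $\mathrm{rank}(H) = \mathrm{rank}(K) + \mathrm{rank}(M)$ bookkeeping for $\lambda = 0$ is exactly the right justification, and you correctly note that $\lambda = 0$ needs its own treatment since rank (unlike a determinant identity) cannot be extended by a polynomial-continuity argument. Your explicit construction of a real eigenvector in (ii) via $w \mapsto y = M^{-1/2}w \mapsto x = \lambda^{-1}My$ is also more concrete than the paper's appeal to the standard fact that a real matrix with a real eigenvalue admits a real eigenvector; both are valid.
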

\begin{proof}
 The eigenvalues of $H$ are square roots of $KM$ because
\begin{equation*}
   {\rm det}(\lambda I -  H) = 
   {\rm det}
\begin{bmatrix}
	\lambda I & -K \\ -M & \lambda I
\end{bmatrix}
=
{\rm det}
\begin{bmatrix}
	O & -K+\lambda^2M^{-1} \\  -M & \lambda I 
\end{bmatrix}
={\rm det}(\lambda^2I - KM).
\end{equation*}
According to the fact that $KM = M^{-1/2}\left( M^{1/2} K M^{1/2}\right) M^{1/2}$  
is similar to a SPSD matrix $M^{1/2}KM^{1/2}$,
it follows that all eigenvalues of $KM$ are non-negative real numbers, 
which immediately implies that $\lambda^2\geq0$, i.e.,  $\lambda \in \mathbb R$.
Then there exists a real-valued eigenvector 
$\begin{bsmallmatrix}y\\x\end{bsmallmatrix} \in \mathbb{R}^{2n}$ 
associated with the eigenvalue $\lambda$.
Moreover, it is easy to check that 
$\begin{bsmallmatrix}-y\\x\end{bsmallmatrix}$ 
is also an eigenvector
with eigenvalue $-\lambda$.
It is straightforward to verify (iv).
\end{proof}

For sake of presentation simplicity, we extend the definition of biorthogonality \cite{Saad2003Iterative}
and introduce the following terminologies.
 
\begin{definition} 
Two vectors $\xi=\begin{bsmallmatrix}y\\x\end{bsmallmatrix},\
\eta = \begin{bsmallmatrix}z\\w\end{bsmallmatrix}\in \mathbb{R}^{2n}$ 
is said to be biorthogonal to each other if $y^{\top}w = z^{\top}x = 0$, 
and we shall denote such biorthogonality as $\xi \pb \eta$,
where $x,\ y,\ w,\ z\in\mathbb{R}^n$.
\end{definition}
\begin{definition}
Vector $\xi \in \mathbb R^{2n}$ is said to be biorthogonal to subspace $\mathcal{V}\subset\mathbb{R}^{2n}$
if it is biorthogonal to all elements of $\mathcal V$, i.e., $\xi \pb \eta, ~\forall~ \eta\in \mathcal V$, 
and we shall denote it as $\xi \pb \mathcal V$. Similarly, 
two subspaces $\mathcal{V}$ and $\mathcal{W}$ are biorthogonal, denoted as  $\mathcal V \pb \mathcal W$, 
if  $\xi \pb \eta, ~\forall~ \xi\in \mathcal V,\ \eta \in \mathcal W$. 
\end{definition}
\begin{definition}
	Matrice
	$\begin{bsmallmatrix} Y \\ X \end{bsmallmatrix}\in \mathbb{R}^{2n \times m}$
	are said to be biorthogonal 
	to subspace $\mathcal{V}\subset\mathbb{R}^{2n}$
	if  
	each column of 
	$\begin{bsmallmatrix} Y \\ X \end{bsmallmatrix}$
	is biorthogonal to subspace $\mathcal{V}\subset\mathbb{R}^{2n}$,
	denoted as  $\begin{bsmallmatrix} Y \\ X \end{bsmallmatrix} \pb \mathcal V$. 
\end{definition}
\begin{definition}
	Matrices $X=[x_1,\dots,x_m]$ and $\ Y=[y_1,\dots,y_m] \in \mathbb{R}^{n \times m}$ are said to be biorthogonal if  
	each column of 
	$\begin{bsmallmatrix} Y \\ X \end{bsmallmatrix}$
	is biorthogonal to the other columns and 
	$x_i^{\top}y_i = 1$.
	Equivalently, $X^{\top}Y = I_{m}$.
\end{definition}

Next, we derive the biorthogonal property of eigenvectors associated with eigenvalues of different magnitudes.
For convenience, we denote $\langle x,y\rangle := x^{\top}y$, $\forall ~x, y \in \mathbb{R}^n$. 

\begin{lemma}[Biorthogonality of eigenvectors]\label{lem:eigPairing}
	Assume  
	$\left\{\lambda_i; \begin{bsmallmatrix} y_i \\ x_i \end{bsmallmatrix}\right\}$, 
	$i = 1,2$,
	 are eigenpairs of $H$ in LREP \eqref{equ:xy_algebra} 
	 with 
	 $|\lambda_1|\neq |\lambda_2|$, 
then the eigenvectors are biorthogonal, that is, 
	$\begin{bsmallmatrix} y_1 \\ x_1 \end{bsmallmatrix} \pb 
	\begin{bsmallmatrix} y_2 \\ x_2 \end{bsmallmatrix}$.
\end{lemma}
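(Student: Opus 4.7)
The plan is to reduce the biorthogonality claim to a $2\times 2$ linear system in the two scalar overlaps $a := \langle y_1, x_2\rangle$ and $b := \langle y_2, x_1\rangle$, and then exploit $|\lambda_1| \neq |\lambda_2|$, equivalently $\lambda_1^2 \neq \lambda_2^2$, to force $a = b = 0$.

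First, I would block-expand the eigenvalue relation $H\begin{bsmallmatrix}y_i\\ x_i\end{bsmallmatrix} = \lambda_i\begin{bsmallmatrix}y_i\\ x_i\end{bsmallmatrix}$ into the coupled system
\[
K x_i = \lambda_i y_i, \qquad M y_i = \lambda_i x_i, \qquad i=1,2.
\]
These four identities, together with the symmetry $K=K^\top$ and $M=M^\top$, are the only ingredients I will use.

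Next, I would evaluate $\lambda_2 a$ in two ways by sliding $M$ across the inner product:
\[
\lambda_2 a = \langle y_1,\lambda_2 x_2\rangle = \langle y_1, M y_2\rangle = \langle M y_1, y_2\rangle = \lambda_1 \langle x_1, y_2\rangle = \lambda_1 b.
\]
Running the analogous manipulation on $\lambda_2 b$, but sliding $K$ instead of $M$, gives
\[
\lambda_2 b = \langle \lambda_2 y_2, x_1\rangle = \langle K x_2, x_1\rangle = \langle x_2, K x_1\rangle = \lambda_1 \langle x_2, y_1\rangle = \lambda_1 a.
\]
Substituting the first identity into the second produces $\lambda_2^2\, a = \lambda_1 \lambda_2\, b = \lambda_1^2\, a$, so $(\lambda_1^2 - \lambda_2^2)\, a = 0$, and symmetrically $(\lambda_1^2-\lambda_2^2)\, b = 0$.

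Because $|\lambda_1|\neq|\lambda_2|$ the factor $\lambda_1^2-\lambda_2^2$ is nonzero, so $a = b = 0$, which is exactly the defining condition $\begin{bsmallmatrix}y_1\\x_1\end{bsmallmatrix} \pb \begin{bsmallmatrix}y_2\\x_2\end{bsmallmatrix}$. I do not anticipate any genuine obstacle; the only subtlety worth verifying is the edge case where one eigenvalue vanishes, say $\lambda_1=0$. Then $\lambda_2\neq 0$ by hypothesis, and the identity $\lambda_2 a = \lambda_1 b$ already yields $a=0$ directly, with $b=0$ following from $\lambda_2 b = \lambda_1 a$; so the argument carries over without any case distinction.
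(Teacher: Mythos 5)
Your proof is correct and takes essentially the same route as the paper: both exploit the symmetry of $K$ and $M$ to slide operators across the inner product and reduce the claim to $(\lambda_1^2-\lambda_2^2)\langle y_j,x_i\rangle=0$. The only cosmetic difference is that you move $K$ and $M$ one at a time and couple $a,b$ through a $2\times 2$ relation, whereas the paper transfers the product $MK$ (equivalently $KM=(MK)^\top$) in a single step and treats the two overlaps independently.
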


\begin{proof}
	Starting from equation \eqref{equ:xy_algebra}, we know that $MK x_i = \lambda_i^2 x_i$ and $KM y_i = \lambda_i^2 y_i$, for $i=1,2$. 
\comm{Then we have 
$\lambda_1^2 \langle y_2,x_1\rangle =\langle y_2, MK x_1\rangle  = \langle KM y_2,  x_1\rangle  =  \lambda_2^2 \langle y_2,x_1\rangle$; therefore,
$(\lambda_1^2 - \lambda_2^2)\langle y_2, x_1 \rangle = 0$, which implies 
$\langle y_2, x_1 \rangle = 0$.
Similarly, we obtain $\langle y_1, x_2 \rangle = 0$.}
\end{proof}


Since $K$ is a SPSD matrix, $H$ has zero eigenvalues.
Next, we shall first investigate the nullspace of $K$ and $H$, i.e.,
$\mathcal{N}(K) = \{x\in\mathbb{R}^{n}:Kx={\bf 0}\}$ and  $\mathcal{N}(H) = \{\xi\in\mathbb{R}^{2n}:H\xi={\bf 0}\}$,
\comm{and the generalized nullspace of $H$, which is defined by $\mathcal{N}(H^{2n}) = \{\xi\in\mathbb{R}^{2n}:H^{2n}\xi={\bf 0}\}$.}

\begin{lemma}[The generalized nullspace]
	\label{lem:zero}
	When ${\rm dim}(\mathcal{N}(K)) = r>0$ and 
	$\mathcal{N}(K) = {\rm span}\{x^0_1,\cdots,x^0_r\}$, 
	we have 
	\begin{itemize}
		\item[\rm{(i)}] The nullspace of $H$ is given explicitly as
				$\mathcal{N}(H) = {\rm span}\{
					\begin{bsmallmatrix}{\bf 0}\\ x^0_1\end{bsmallmatrix},\cdots,
				\begin{bsmallmatrix}{\bf 0}\\ x^0_r\end{bsmallmatrix} \}$  with $K x_i^0 = {\bf 0}$, $\forall~i = 1,\ldots, r$.
		\item[\rm{(ii)}] The generalized nullspace of $H$ is 
			$\mathcal{N}(H^{2n})=
\mathcal{N}(H)+
				{\rm span}\{
					\begin{bsmallmatrix}y^0_1\\ {\bf 0}\end{bsmallmatrix},\cdots,
					\begin{bsmallmatrix}y^0_r\\ {\bf 0}\end{bsmallmatrix} \}$
					with $My^0_i=x^0_i$, $\forall~i=1,\ldots,r$.
		\item [\rm{(iii)}]
            For non-zero eigenvalue $\lambda$ with eigenvector $\begin{bsmallmatrix}y\\x\end{bsmallmatrix}\in \mathbb{R}^{2n}$,
			we have 
	$\begin{bsmallmatrix} y^0_i \\ x^0_i \end{bsmallmatrix} \pb \begin{bsmallmatrix} y \\ x \end{bsmallmatrix}$, i.e.,
	$\langle y^0_i, x\rangle = \langle x^0_i, y \rangle = 0$, $\forall~i=1,\ldots,r$.
	\end{itemize}
\end{lemma}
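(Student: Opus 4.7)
The plan is to attack the three parts in order, using the block structure of $H$ and reducing questions about $H$ to questions about $K$ and $M$ on $\mathbb R^{n}$.

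For \textrm{(i)}, I would write $H\begin{bsmallmatrix}y\\x\end{bsmallmatrix}=\begin{bsmallmatrix}Kx\\My\end{bsmallmatrix}$ and simply read off that $\xi\in\mathcal{N}(H)$ iff $Kx=\mathbf{0}$ and $My=\mathbf{0}$. Because $M$ is SPD, the latter forces $y=\mathbf{0}$, so the nullspace consists exactly of vectors of the form $\begin{bsmallmatrix}\mathbf{0}\\x\end{bsmallmatrix}$ with $x\in\mathcal{N}(K)$; picking the basis $\{x_1^{0},\dots,x_r^{0}\}$ gives the claim.

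For \textrm{(ii)}, the plan is to observe $H^{2}=\begin{bsmallmatrix}KM & O\\ O & MK\end{bsmallmatrix}$, so that $H^{2n}\xi=\mathbf{0}$ decouples into $(KM)^{n}y=\mathbf{0}$ and $(MK)^{n}x=\mathbf{0}$. The key ingredient, already used in the proof of Lemma \ref{lem:eigReal}, is that $KM$ and $MK$ are each similar to the SPSD matrix $M^{1/2}KM^{1/2}$, hence diagonalizable. Therefore their generalized nullspaces collapse to their ordinary nullspaces, reducing the two conditions to $KMy=\mathbf{0}$ and $MKx=\mathbf{0}$. Invertibility of $M$ then gives $Kx=\mathbf{0}$, so $x\in\mathrm{span}\{x_{i}^{0}\}$, and $My\in\mathcal{N}(K)$, so $My=\sum d_{i}x_{i}^{0}=\sum d_{i}My_{i}^{0}$, whence $y\in\mathrm{span}\{y_{i}^{0}\}$. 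The reverse inclusion is just a direct check: $H\begin{bsmallmatrix}\mathbf{0}\\x_{i}^{0}\end{bsmallmatrix}=\mathbf{0}$ and $H^{2}\begin{bsmallmatrix}y_{i}^{0}\\\mathbf{0}\end{bsmallmatrix}=H\begin{bsmallmatrix}\mathbf{0}\\x_{i}^{0}\end{bsmallmatrix}=\mathbf{0}$. One must also note that the candidate vectors $\begin{bsmallmatrix}y_{i}^{0}\\\mathbf{0}\end{bsmallmatrix}$ are well defined because $M$ is invertible, so $y_{i}^{0}:=M^{-1}x_{i}^{0}$ exists uniquely.

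Part \textrm{(iii)} is the cleanest. Given a nonzero eigenvalue $\lambda$ with eigenvector $\begin{bsmallmatrix}y\\x\end{bsmallmatrix}$, I would use the componentwise relations $Kx=\lambda y$ and $My=\lambda x$ together with the symmetry of $K$ and $M$. A one-line computation $\lambda\langle x_{i}^{0},y\rangle=\langle x_{i}^{0},Kx\rangle=\langle Kx_{i}^{0},x\rangle=0$ gives $\langle x_{i}^{0},y\rangle=0$ since $\lambda\neq 0$, and symmetrically $\lambda\langle y_{i}^{0},x\rangle=\langle y_{i}^{0},My\rangle=\langle My_{i}^{0},y\rangle=\langle x_{i}^{0},y\rangle=0$ gives $\langle y_{i}^{0},x\rangle=0$.

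The main obstacle is part \textrm{(ii)}: a priori one only has $\mathcal{N}(H)\subsetneq\mathcal{N}(H^{2n})$ and must show the generalized nullspace is exactly doubled by the Jordan-chain vectors $\begin{bsmallmatrix}y_{i}^{0}\\\mathbf{0}\end{bsmallmatrix}$, with no further chain extension possible. The crucial observation that prevents longer Jordan chains is the diagonalizability of $KM$ and $MK$ coming from the SPSD similarity; without it one could imagine $H$ having Jordan blocks of size larger than $2$ at the zero eigenvalue, which would enlarge $\mathcal{N}(H^{2n})$ beyond the stated span.
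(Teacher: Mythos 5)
Your proof is correct, and for the nontrivial part \textrm{(ii)} it takes a genuinely different route from the paper. The paper argues by rank stabilization: it writes out $H^{2}=\begin{bsmallmatrix}KM & O\\ O & MK\end{bsmallmatrix}$ and $H^{3}=\begin{bsmallmatrix}O & KMK\\ MKM & O\end{bsmallmatrix}$, shows $\mathrm{rank}(H^{2})=\mathrm{rank}(H^{3})=2(n-r)$ using $\mathrm{rank}(KMK)=\mathrm{rank}\bigl((M^{1/2}K)^{\top}M^{1/2}K\bigr)=\mathrm{rank}(K)$, concludes that the nullspace chain stabilizes at $H^{2}$, and then verifies that the $2r$ candidate vectors span a subspace of the right dimension. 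You instead use diagonalizability of $KM$ and $MK$ directly (via similarity to the SPSD matrix $M^{1/2}KM^{1/2}$) to collapse $(KM)^{n}$ and $(MK)^{n}$ nullspaces to those of $KM$ and $MK$, and then solve for $x$ and $y$ explicitly using invertibility of $M$. Your approach is more structural: it makes explicit \emph{why} no Jordan chain of length $\geq 3$ can appear at zero, whereas the paper's rank count establishes the same fact but packaged as dimension bookkeeping. Both hinge on the same algebraic seed --- the similarity of $KM$, $MK$ to a symmetric matrix --- so they are not far apart in spirit, but your version avoids the $H^{3}$ computation and gives the containment $\mathcal{N}(H^{2n})\subseteq \mathcal{N}(H)+\mathrm{span}\{\begin{bsmallmatrix}y_{i}^{0}\\\mathbf{0}\end{bsmallmatrix}\}$ constructively rather than by dimension count. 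For \textrm{(i)} you also argue directly (decouple $Kx=\mathbf{0}$, $My=\mathbf{0}$ and use $M$ SPD) rather than computing $\mathrm{rank}(H)=2n-r$ as the paper does, which is a minor stylistic difference. Part \textrm{(iii)} is identical to the paper's. One small point worth making explicit in your write-up of \textrm{(ii)}: since $M$ is invertible and $\{x_{i}^{0}\}$ are linearly independent, $\{y_{i}^{0}=M^{-1}x_{i}^{0}\}$ are also linearly independent, so the sum $\mathcal{N}(H)+\mathrm{span}\{\begin{bsmallmatrix}y_{i}^{0}\\\mathbf{0}\end{bsmallmatrix}\}$ has dimension exactly $2r$; your two-inclusion argument already gives set equality, so this is only needed if one also wants the dimension statement.
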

\vspace{0.2cm}
\begin{proof}
For (i), it is obviously that ${\rm rank}(H) = {\rm rank}(K)+{\rm rank}(M)=n-r+n = 2n-r$.
So ${\rm dim}(\mathcal{N}(H)) = r$ and
$H\begin{bsmallmatrix}{\bf 0}\\ x^0_i\end{bsmallmatrix} = \begin{bsmallmatrix} Kx^0_i \\ {\bf 0}\end{bsmallmatrix}
= \begin{bsmallmatrix}{\bf 0}\\ {\bf 0}\end{bsmallmatrix}$, $\forall~i=1,\ldots,r$.

	For (ii), due to the fact that
		$H^2 
		=\begin{bsmallmatrix} KM & O \\ O & MK \end{bsmallmatrix}$ and 
		$H^3 =\begin{bsmallmatrix} O & KMK \\ MKM & O \end{bsmallmatrix}$,
	we have 
	${\rm rank}(H^2) = {\rm rank}(KM)+{\rm rank}(MK)= {\rm rank}(K)+{\rm rank}(K)=2(n-r)$, 
	and 
	${\rm rank}(H^3) = {\rm rank}(KMK)+{\rm rank}(MKM)=2(n-r)$,
	because 
	$${\rm rank}(KMK)={\rm rank}((M^{1/2}K)^{\top}M^{1/2}K) = {\rm rank}(M^{1/2}K)= {\rm rank}(K)=n-r.$$
	Moreover, it holds that
	 $\begin{bsmallmatrix} KM & O \\ O & MK \end{bsmallmatrix}
		\begin{bsmallmatrix}{\bf 0}\\ x^0_i\end{bsmallmatrix} = 
		\begin{bsmallmatrix}{\bf 0}\\ {\bf 0}\end{bsmallmatrix}$ and 
   $\begin{bsmallmatrix} KM & O \\ O & MK \end{bsmallmatrix}
	 \begin{bsmallmatrix}y^0_i\\ {\bf 0}\end{bsmallmatrix} = \begin{bsmallmatrix}{\bf 0}\\ {\bf 0}\end{bsmallmatrix}$,
		$\forall~i=1,\ldots,r$.
	Then, based on 
	${\rm rank}([y^0_1,\cdots,y^0_r])={\rm rank}([x^0_1,\cdots,x^0_r])=r$, 
	(ii) is proved.


      For (iii),
	due to the facts that $Kx=\lambda y$ and $My=\lambda x$ with $\lambda\neq0$, it follows that
		$\langle x^0_i, y \rangle
		=\langle x^0_i, \frac{1}{\lambda} Kx\rangle 
		=\langle Kx^0_i, \frac{1}{\lambda}x\rangle = 0$ and 
		$\langle y^0_i, x\rangle 
		=\langle y^0_i, \frac{1}{\lambda} My\rangle 
		=\langle My^0_i, \frac{1}{\lambda} y\rangle
		=\langle x^0_i, \frac{1}{\lambda} y\rangle =0$.
\end{proof}

\subsection{Direct sum decomposition and minimization principle}
\label{sec:dsd_mp}

\comm{
The Jordan canonical form of $H$ is given in \cite{Bai2012Minimization} (Theorem 2.3), 
and it is equivalent to a direct sum decomposition of $\mathbb{R}^{2n}$ into $2(n-r)+r$ invariant subspaces, 
with each invariant subspace being associated with a Jordan block.
Here, according to the biorthogonal property of eigenvectors of $H$,
we decompose $\mathbb{R}^{2n}$ into $(n-r)+1$ 
\textbf{biorthogonal} invariant subspaces (see Theorem \ref{thm:dsd_biorth_is}),
where $\mathcal{V}_0$
corresponds to $r$ Jordan blocks of order two with eigenvalue $0$
(the generalized nullspace),
and $\mathcal{V}_i$ 
corresponds to $2$ Jordan blocks of order one
with eigenvalues $\pm\lambda_i$ for $i=1,\dots,n-r$.
In essence, the Jordan canonical form algebraically formalizes the invariant subspace decomposition, 
while the decomposition provides a geometric understanding.
}

\begin{theorem}[\textbf{Direct sum decomposition}]
	\label{thm:dsd_biorth_is}
	Let $H\in\mathbb{R}^{2n\times 2n}$ be given in \eqref{equ:xy_algebra}
	and ${\rm dim}(\mathcal{N}(H)) = r>0$,
	then we have 
\begin{equation}
	\label{equ:decomp_invar_subsp}
	\mathbb{R}^{2n} = \mathcal{V}_0\oplus \mathcal{V}_{1}\oplus \cdots \oplus \mathcal{V}_{n-r},
\end{equation}
where 
$\mathcal{V}_0
	={\rm span}\{
		\begin{bsmallmatrix} {\bf 0} \\ x_1^0 \end{bsmallmatrix},\cdots,
		\begin{bsmallmatrix} {\bf 0} \\ x_r^0 \end{bsmallmatrix},
		\begin{bsmallmatrix} y_1^0 \\ {\bf 0} \end{bsmallmatrix},\cdots,
	\begin{bsmallmatrix} y_r^0 \\ {\bf 0} \end{bsmallmatrix}\}$
and 
$\mathcal{V}_i 
 = {\rm span}\{
	\begin{bsmallmatrix} {\bf 0} \\ x_i \end{bsmallmatrix},
\begin{bsmallmatrix} y_i \\ {\bf 0} \end{bsmallmatrix}\}$,
for $i=1,\ldots,n-r$,
are the invariant subspaces of $H$
satisfying 
\begin{equation}
	\label{equ:xy_zero}
H
\begin{bmatrix} {\bf 0} \\ x_i^0 \end{bmatrix}
=
\begin{bmatrix} {\bf 0} \\ {\bf 0} \end{bmatrix},\qquad
H
\begin{bmatrix} y_i^0 \\ {\bf 0} \end{bmatrix}
=
\begin{bmatrix} {\bf 0} \\ x_i^0 \end{bmatrix},\qquad \forall~i=1,\ldots,r,
\end{equation}
and 
\begin{equation}
	\label{equ:xy_non_zero}
H
\begin{bmatrix} y_i \\ x_i \end{bmatrix}
= \lambda_i
\begin{bmatrix} y_i \\ x_i \end{bmatrix},\qquad
H
\begin{bmatrix} -y_i \\ x_i \end{bmatrix}
=-\lambda_i
\begin{bmatrix} -y_i \\ x_i \end{bmatrix},
\qquad \forall~i=1,\ldots,n-r.
\end{equation}
Here, $0<\lambda_1\leq\lambda_2\leq\cdots\leq\lambda_{n-r}$ are all positive eigenvalues,
and 
the biorthogonal property
\begin{equation}
	\label{equ:XYI}
	[x_1^0,\cdots,x_r^0,x_1,\cdots,x_{n-r}]^{\top}
	[y_1^0,\cdots,y_r^0,y_1,\cdots,y_{n-r}]=I_n
\end{equation}
holds, i.e., $\mathcal{V}_i \pb \mathcal{V}_j$, $\forall~i\neq j$.
\end{theorem}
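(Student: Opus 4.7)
The plan is to explicitly construct bases for $\mathcal{V}_0$ and for each $\mathcal{V}_i$ ($i=1,\ldots,n-r$), verify $H$-invariance, and then establish pairwise biorthogonality, which in one shot yields the biorthogonal identity \eqref{equ:XYI}, the direct sum in \eqref{equ:decomp_invar_subsp}, and the dimension count $2n$. The main lever for the positive-eigenvalue part is the similarity $KM=M^{-1/2}(M^{1/2}KM^{1/2})M^{1/2}$ already used in Lemma \ref{lem:eigReal}: applying the spectral theorem to the SPSD matrix $S:=M^{1/2}KM^{1/2}$ (of rank $n-r$) and transporting back by $M^{-1/2}$ delivers $M$-orthonormal eigenvectors of $KM$, from which all ingredients for the $\mathcal{V}_i$ follow.

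For $\mathcal{V}_0$, I would invoke Lemma \ref{lem:zero}(i)-(ii) to get the basis $\{\begin{bsmallmatrix}{\bf 0}\\ x_i^0\end{bsmallmatrix},\begin{bsmallmatrix}y_i^0\\ {\bf 0}\end{bsmallmatrix}\}_{i=1}^{r}$ with $Kx_i^0={\bf 0}$ and $My_i^0=x_i^0$; this immediately gives $\dim\mathcal{V}_0=2r$ and \eqref{equ:xy_zero}. To match the normalization required by \eqref{equ:XYI}, I would further choose $\{x_1^0,\ldots,x_r^0\}$ to be $M^{-1}$-orthonormal in $\mathcal{N}(K)$ (possible by Gram--Schmidt, since $M^{-1}$ is SPD on the $r$-dimensional space $\mathcal{N}(K)$), and then set $y_i^0=M^{-1}x_i^0$, yielding $x_j^{0\top}y_i^0=x_j^{0\top}M^{-1}x_i^0=\delta_{ij}$, which is the $(r\times r)$ leading block of \eqref{equ:XYI}.

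For each $\mathcal{V}_i$, $i\geq 1$, the spectral theorem on $S$ furnishes $n-r$ positive eigenvalues $\lambda_1^2\leq\cdots\leq\lambda_{n-r}^2$ with orthonormal eigenvectors $\{\tilde{y}_i\}$ in $\mathrm{range}(S)$. Setting $y_i=M^{-1/2}\tilde{y}_i$ produces $M$-orthonormal eigenvectors of $KM$, i.e., $KMy_i=\lambda_i^2 y_i$ and $y_i^\top M y_j=\delta_{ij}$. Defining $x_i:=\lambda_i^{-1}My_i$ yields $Kx_i=\lambda_i y_i$ and $My_i=\lambda_i x_i$, which directly verifies \eqref{equ:xy_non_zero} for both $\pm\lambda_i$; taking sums and differences of $\begin{bsmallmatrix}\pm y_i\\ x_i\end{bsmallmatrix}$ identifies $\mathcal{V}_i=\mathrm{span}\{\begin{bsmallmatrix}{\bf 0}\\ x_i\end{bsmallmatrix},\begin{bsmallmatrix}y_i\\ {\bf 0}\end{bsmallmatrix}\}$ as a 2-dimensional $H$-invariant subspace. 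A final rescaling $y_i\leftarrow\sqrt{\lambda_i}\,y_i$, $x_i\leftarrow\sqrt{\lambda_i}\,x_i$ preserves \eqref{equ:xy_non_zero} and converts $x_i^\top y_j=\lambda_i^{-1}y_i^\top My_j$ into $\delta_{ij}$, giving the lower-right $((n-r)\times(n-r))$ block of \eqref{equ:XYI}.

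The remaining off-diagonal blocks of \eqref{equ:XYI} are exactly the cross-subspace biorthogonalities: the $\mathcal{V}_0$--versus--$\mathcal{V}_i$ couplings are Lemma \ref{lem:zero}(iii), while $\mathcal{V}_i\pb\mathcal{V}_j$ for $i\neq j$ in $\{1,\ldots,n-r\}$ reduces to $y_i^\top M y_j=0$, which holds by $M$-orthogonality (this also covers multiplicity cases not directly captured by Lemma \ref{lem:eigPairing}). Once \eqref{equ:XYI} is in hand, biorthogonality forces linear independence of the $2n$ basis vectors listed there, so the sum $\mathcal{V}_0+\mathcal{V}_1+\cdots+\mathcal{V}_{n-r}$ is direct and has total dimension $2r+2(n-r)=2n$, hence fills $\mathbb{R}^{2n}$. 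The main obstacle I expect is coordinating the normalizations so that \eqref{equ:XYI} lands on the identity rather than on a diagonal: freely chosen eigenvectors give biorthogonality only up to diagonal scalings, and one must jointly enforce the $M^{-1}$-orthonormal choice inside $\mathcal{N}(K)$ and the $\sqrt{\lambda_i}$ rescaling of the positive-$\lambda$ eigenvectors for the identity to be achieved simultaneously in all blocks.
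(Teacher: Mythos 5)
Your construction is correct and follows the same overall architecture as the paper's proof: build explicit bases for $\mathcal{V}_0$ and each $\mathcal{V}_i$, verify the $H$-invariance relations \eqref{equ:xy_zero}--\eqref{equ:xy_non_zero}, establish the biorthogonality \eqref{equ:XYI}, and conclude the direct sum from the resulting linear independence of $2n$ vectors. The concrete route differs in two places, and the differences are worth noting. For the generalized nullspace block, the paper factorizes $[\widetilde{y}^0]^{\top}M[\widetilde{y}^0]=C^{\top}C$ and sets $[x^0]=[\widetilde{x}^0]C^{-1}$, $[y^0]=[\widetilde{y}^0]C^{-1}$; your $M^{-1}$-Gram--Schmidt normalization in $\mathcal{N}(K)$ followed by $y_i^0=M^{-1}x_i^0$ achieves exactly the same thing. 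For the positive-eigenvalue block the paper only says ``Similarly, using Lemma \ref{lem:eigReal} and \ref{lem:eigPairing}, we can prove \ldots'', which is thin: Lemma \ref{lem:eigPairing} gives biorthogonality only across \emph{different} magnitudes and says nothing inside a multiple eigenvalue. Your detour through the spectral theorem for $S=M^{1/2}KM^{1/2}$, the pull-back $y_i=M^{-1/2}\tilde y_i$, the definition $x_i=\lambda_i^{-1}My_i$, and the $\sqrt{\lambda_i}$ rescaling makes this step fully explicit and handles multiplicities cleanly. This is a genuine sharpening of the paper's terse argument; it buys rigor at the cost of a slightly longer exposition.
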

\begin{proof}
	Let the nullspace of $K$ be ${\rm span}\{\widetilde{x}^0_1,\cdots, \widetilde{x}^0_r \}$,
	then there exist $\widetilde{y}^0_1,\ldots,\widetilde{y}^0_r\in\mathbb{R}^n$
satisfying $M \widetilde{y}_i^0 = \widetilde{x}_i^0$, $\forall~i=1,\ldots,r$.
Thus, we obtain
\begin{equation*}
	[\widetilde{x}^0_1,\cdots,\widetilde{x}^0_r]^{\top}
	[\widetilde{y}^0_1,\cdots,\widetilde{y}^0_r]
	=
	[\widetilde{y}^0_1,\cdots,\widetilde{y}^0_r]^{\top}
	M
	[\widetilde{y}^0_1,\cdots,\widetilde{y}^0_r]
	= C^{\top}C,
\end{equation*}
where $C\in\mathbb{R}^{r\times r}$ is an invertible matrix.
It is easy to check that 
\begin{equation}
	\label{equ:V0_basis}
	[{x}^0_1,\cdots,{x}^0_r]=
	[\widetilde{x}^0_1,\cdots,\widetilde{x}^0_r]C^{-1},\quad
	[{y}^0_1,\cdots,{y}^0_r]=
	[\widetilde{y}^0_1,\cdots,\widetilde{y}^0_r]C^{-1}
\end{equation}
satisfy equation \eqref{equ:xy_zero} and 
$[{x}^0_1,\cdots,{x}^0_r]^{\top}[{y}^0_1,\cdots,{y}^0_r]=I_r$.
Similarly, using Lemma \ref{lem:eigReal} and \ref{lem:eigPairing}, we can prove that  
there exist 
	$x_1,\ldots,x_{n-r}$ and $y_1,\cdots,$
	$y_{n-r}$ satisfying
	equation \eqref{equ:xy_non_zero}
	and 
	$
	[x_1,\cdots,x_{n-r}]^{\top}
	[y_1,\cdots,y_{n-r}]=I_{n-r}.
	$
	According to Lemma \ref{lem:zero} (iii), equation \eqref{equ:XYI} holds.
	Therefore, the direct sum decomposition \eqref{equ:decomp_invar_subsp}
	is proved.
\end{proof}

\comm{
Based on the above direct sum decomposition \eqref{equ:decomp_invar_subsp},
we construct the following minimization principles
in the biorthogonal complement of 
$\mathcal{V}_{0}\oplus \cdots \oplus \mathcal{V}_{{\ell}}$, 
and provide a concise proof.
}

\begin{theorem}[\textbf{Minimization principles}]
	\label{thm:biorth_compl}
	For given $k=1,\ldots,n-r$ and $\ell = 0,1,\ldots,n-r-k$, we have
	\begin{equation}
		\label{equ:lambda}
		\sum_{i=1}^k\lambda_{\ell+i}
		=\min_{\scriptstyle 
			\substack{
\begin{bsmallmatrix}V\\U\end{bsmallmatrix}\pb
\mathcal{V}_{0}\oplus \cdots \oplus \mathcal{V}_{{\ell}}\\
			{U}^{\top}{V}=I_k} 
		}
		\frac{1}{2}
		{\rm trace}\big({U}^{\top}K{U} + {V}^{\top}M{V}\big).
	\end{equation}
\end{theorem}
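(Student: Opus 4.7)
My plan is to reduce \eqref{equ:lambda} to a finite-dimensional diagonal optimization via the invariant-subspace basis supplied by Theorem \ref{thm:dsd_biorth_is}, then attack the resulting scalar problem through a matrix AM--GM inequality combined with the Poincar\'e separation theorem. First I would translate the biorthogonality constraint into coordinates. Expanding every column of $\begin{bsmallmatrix}V\\U\end{bsmallmatrix}$ against the biorthogonal basis in \eqref{equ:XYI}, the condition $\begin{bsmallmatrix}V\\U\end{bsmallmatrix}\pb\mathcal{V}_0\oplus\cdots\oplus\mathcal{V}_\ell$ is equivalent to $V^\top x_i^0=U^\top y_i^0=0$ for $i=1,\ldots,r$ together with $V^\top x_j=U^\top y_j=0$ for $j=1,\ldots,\ell$. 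These orthogonalities force $U=X_1 G$ and $V=Y_1 F$ with $X_1:=[x_{\ell+1},\ldots,x_{n-r}]$, $Y_1:=[y_{\ell+1},\ldots,y_{n-r}]$, and $G,F\in\mathbb{R}^{(n-r-\ell)\times k}$. Using $KX_1=Y_1\Lambda_1$, $MY_1=X_1\Lambda_1$ with $\Lambda_1:=\mathrm{diag}(\lambda_{\ell+1},\ldots,\lambda_{n-r})$ and $X_1^\top Y_1=I$, the problem collapses to minimizing $\tfrac{1}{2}\mathrm{trace}(G^\top\Lambda_1 G+F^\top\Lambda_1 F)$ over $G^\top F=I_k$.

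Next I would eliminate $F$. For fixed $G$ of full column rank, the Lagrange conditions for $\min\mathrm{trace}(F^\top\Lambda_1 F)$ under $G^\top F=I_k$ give the closed form $F=\Lambda_1^{-1}G(G^\top\Lambda_1^{-1}G)^{-1}$ with optimal value $\mathrm{trace}((G^\top\Lambda_1^{-1}G)^{-1})$. Writing $G=\Lambda_1^{1/2}QS$ with $Q^\top Q=I_k$ and $S\succ 0$ (the polar decomposition of $\Lambda_1^{-1/2}G$), the remaining objective becomes $\tfrac{1}{2}[\mathrm{trace}(S^2T)+\mathrm{trace}(S^{-2})]$ with $T:=Q^\top\Lambda_1^2 Q$.

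Finally I would chain two classical inequalities. Rewriting the trace terms as Frobenius norms $\mathrm{trace}(S^2T)=\|T^{1/2}S\|_F^2$ and $\mathrm{trace}(S^{-2})=\|S^{-1}\|_F^2$, the matrix AM--GM bound $\|A\|_F^2+\|B\|_F^2\geq 2\|AB\|_*$ (with $\|\cdot\|_*$ the nuclear norm) applied to $A=T^{1/2}S$, $B=S^{-1}$, for which $AB=T^{1/2}$ is symmetric PSD so $\|AB\|_*=\mathrm{trace}(T^{1/2})$, yields the intermediate lower bound $\mathrm{trace}(T^{1/2})$. The Poincar\'e separation theorem then gives $\tau_i\geq\lambda_{\ell+i}^2$ for the ascending eigenvalues $\tau_i$ of $T$, so $\mathrm{trace}(T^{1/2})\geq\sum_{i=1}^k\lambda_{\ell+i}$. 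Tightness is checked directly: the natural candidate $U=[x_{\ell+1},\ldots,x_{\ell+k}]$, $V=[y_{\ell+1},\ldots,y_{\ell+k}]$ satisfies $U^\top V=I_k$ and yields $U^\top KU=V^\top MV=\mathrm{diag}(\lambda_{\ell+1},\ldots,\lambda_{\ell+k})$, attaining the claimed sum.

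The main obstacle is the matrix AM--GM step $\mathrm{trace}(S^2T)+\mathrm{trace}(S^{-2})\geq 2\mathrm{trace}(T^{1/2})$: the scalar analogue $s^2t+s^{-2}\geq 2\sqrt{t}$ is one line, but its operator upgrade must be routed through singular values via $\|A\|_F^2+\|B\|_F^2\geq 2\sum_i\sigma_i(A)\sigma_i(B)\geq 2\sum_i\sigma_i(AB)=2\|AB\|_*$, relying on von Neumann's singular-value majorization. Every other ingredient is either direct substitution, a standard Lagrange-multiplier calculation, or Poincar\'e/Cauchy interlacing.
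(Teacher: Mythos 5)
Your proof is correct, and it takes a genuinely different route from the paper after the common first step. Both proofs begin by expanding $\begin{bsmallmatrix}V\\U\end{bsmallmatrix}$ in the biorthogonal basis of Theorem~\ref{thm:dsd_biorth_is} and observing that the constraint forces the coordinates over $\mathcal V_0\oplus\cdots\oplus\mathcal V_\ell$ to vanish, reducing \eqref{equ:lambda} to minimizing $\tfrac12\operatorname{trace}(G^\top\Lambda_1 G + F^\top\Lambda_1 F)$ subject to $G^\top F = I_k$ with $\Lambda_1 = \operatorname{diag}(\lambda_{\ell+1},\dots,\lambda_{n-r})$. From there the paper writes the first-order Lagrangian system in $(C_2,D_2,\Theta)$, asserts that any critical pair must share a common $k$-dimensional coordinate invariant subspace of $\Lambda^2$, parametrizes via the SVD $\widehat C = Q\Sigma P^\top$, $\widehat D = Q\Sigma^{-1}P^\top$, and uses the diagonal inequality $\Sigma^2 + \Sigma^{-2}\succeq 2I$ to finish; it explicitly restricts to simple eigenvalues and does not separately verify that the infimum over the noncompact set $\{C_2^\top D_2 = I_k\}$ is attained at a critical point. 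You instead perform a nested minimization: you eliminate $F$ in closed form, $F = \Lambda_1^{-1}G(G^\top\Lambda_1^{-1}G)^{-1}$, pass to the polar factorization $\Lambda_1^{-1/2}G = QS$, and lower-bound the resulting expression $\tfrac12[\operatorname{trace}(S^2T)+\operatorname{trace}(S^{-2})]$, $T = Q^\top\Lambda_1^2 Q$, by $\operatorname{trace}(T^{1/2})$ via the matrix AM--GM inequality $\|A\|_F^2 + \|B\|_F^2 \geq 2\|AB\|_*$ (von Neumann / singular-value majorization), and then by $\sum_{i=1}^k\lambda_{\ell+i}$ via Poincar\'e interlacing. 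Your route yields a uniform lower bound over \emph{all} feasible $(G,F)$ rather than only over stationary points, handles repeated eigenvalues with no extra work, and sidesteps the unproved-in-the-paper assertion that the column spaces of $C_2$ and $D_2$ coincide. The trade-off is that you invoke heavier classical machinery (nuclear norm, von Neumann's inequality, Poincar\'e separation) where the paper needs only a diagonal SVD estimate.
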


\begin{proof}
We shall only prove the case in which all the eigenvalues are simple,
and the case of multiple eigenvalues can be derived analogously.
According to 
the direct sum decomposition of biorthogonal invariant subspaces of $H$
in Theorem \ref{thm:dsd_biorth_is},
for any $\begin{bsmallmatrix}V\\U\end{bsmallmatrix}\in\mathbb{R}^{2n\times k}$,
there exist
$C_1,\ D_1\in\mathbb{R}^{(r+\ell)\times k}$ and 
$C_2,\ D_2\in\mathbb{R}^{(n-r-\ell)\times k}$
such that
\begin{align*}
	V&=[y_1^0,\cdots,y_r^0,y_1,\cdots,y_{\ell}]D_1+[y_{\ell+1},\cdots,y_{n-r}]D_2,\\
	U&=[x_1^0,\cdots,x_r^0,x_1,\cdots,x_{\ell}]C_1+[x_{\ell+1},\cdots,x_{n-r}]C_2.
\end{align*}
If $\begin{bsmallmatrix}V \\ U\end{bsmallmatrix} \pb	\mathcal{V}_{0}\oplus \cdots \oplus \mathcal{V}_{\ell}$,
then 
$[x_1^0,\cdots,x_r^0,x_1,\cdots,x_{\ell}]^{\top}V$ and $[y_1^0,\cdots,y_r^0,y_1,\cdots,y_{\ell}]^{\top}U$
are both zero matrices.
Due to the biorthogonal property \eqref{equ:XYI}
and $U^{\top}V=I_k$,
we obtain that $C_1$ and $D_1$ are both zero matrices, i.e., each column of $\begin{bsmallmatrix} V \\ U \end{bsmallmatrix}$
belong to $\mathcal{V}_{\ell+1}\oplus \cdots \oplus \mathcal{V}_{n-r}$, and $C_2^{\top}D_2=I_{k}$.  
Then we have 
\begin{equation}
	\label{equ:trace_1}
	\begin{split}
		&\min_{\scriptstyle 
		\substack{
			\begin{bsmallmatrix}V\\U\end{bsmallmatrix}\pb
			\mathcal{V}_{0}\oplus \cdots \oplus \mathcal{V}_{\ell}\\
		{U}^{\top}{V}=I_k} 
	}
	\frac{1}{2}
	{\rm trace}\big({U}^{\top}K{U} + {V}^{\top}M{V}\big)\\
	=&
	\min_{\scriptstyle 
		\substack{
		{C_2}^{\top}{D_2}=I_k} 
	}
	\frac{1}{2}
	{\rm trace}\big(
		{C_2}^{\top} \Lambda {C_2} 
		+ {D_2}^{\top} \Lambda {D_2}\big).
	\end{split}
\end{equation}
where $\Lambda = {\rm diag}\{{\lambda}_{\ell+1},\cdots,{\lambda}_{n-r}\}$.
According to first order necessary optimality condition, 
any extreme points $C_2$ and $D_2$ satisfies the Lagrangian system:
\begin{equation*}
		{\Lambda}C_2-D_2\Theta^{\top}  = O,\
		{\Lambda}D_2-C_2\Theta= O,\
		C_2^{\top}D_2 = I_k,
	\end{equation*}
where $\Theta=(\theta_{ij})_{k\times k}$
and $\theta_{ij}$ is the Lagrangian multiplier.
It can be directly verified that the column space of $C_2$ and $D_2$
are the same $k$-dimensional invariant subspaces of ${\Lambda}^2$.
Specifically, there exist two invertible matrices
$\widehat{C},\ \widehat{D}\in\mathbb{R}^{k\times k}$
that satisfy 
$C_2 =[e_{i_1},\cdots,e_{i_k}]\widehat{C}$,
$D_2 =[e_{i_1},\cdots,e_{i_k}]\widehat{D}$ and
$\widehat{C}^{\top}\widehat{D} = I_k$,
where $e_{i_p}$ is the $i_p$-th column of the $(n-r-\ell)$-order identity matrix,
and the sequence of integers 
$\{i_1,\dots,i_k\}$ satisfy
$1\leq i_1< \dots< i_k\leq (n-r-\ell)$. 

Performing the singular value decomposition on matrix $\widehat{C} = Q\Sigma P^{\top}$,
we have $\widehat{D} = \widehat{C}^{-\top}= Q\Sigma^{-1} P^{\top}$.
It is noticed that 
\begin{equation*}
	\begin{split}
&	\frac{1}{2}
	{\rm trace}\big(
		{C_2}^{\top} \Lambda {C_2} 
		+ {D_2}^{\top} \Lambda {D_2}\big)=
	\frac{1}{2}
	{\rm trace}\big(
		\widehat{C}^{\top} \widehat{\Lambda} \widehat{C} 
		+ \widehat{D}^{\top} \widehat{\Lambda} \widehat{D}\big)
	\\
=&\frac{1}{2}
	{\rm trace}\big(
		(\Sigma^2+\Sigma^{-2}) Q^{\top}
		\widehat{\Lambda}
Q\big)
	\geq
	{\rm trace}\big(
		Q^{\top}
		\widehat{\Lambda}
Q\big)
=
	{\rm trace}\big(
		\widehat{\Lambda}
	\big),
	\end{split}
\end{equation*}
where $\widehat{\Lambda} = {\rm diag}\{{\lambda}_{\ell+i_1},\cdots,{\lambda}_{\ell+i_k}\}$.
So equation \eqref{equ:trace_1} has a lower bound $\sum_{i=1}^{k}\lambda_{\ell+i}$
which is attained when $C_2=D_2 = \begin{bsmallmatrix} I_k \\ O\end{bsmallmatrix}$.
To sum up, equation \eqref{equ:lambda} holds.
\end{proof}

\comm{
\begin{remark}[Deflation mechanism]
According to Theorem \ref{thm:biorth_compl},
we can compute $\lambda_{\ell+i}$ for $i=1,\dots,k$
in the biorthogonal complement
of the invariant subspaces
$\mathcal{V}_{0}\oplus \cdots \oplus \mathcal{V}_{l}$. 
Such a deflation mechanism 
is similar to that 
for symmetric eigenvalue problems in \cite{Hernandez2005SLEPc,Li2023GCGE}, 
and \textsl{does not} require any \textit{a priori} spectral distribution of $H$.
\end{remark}
}

\begin{remark}[Numerical computation of generalized nullspace]
\label{rem:V0}
As stated in the minimization principles \eqref{equ:lambda}, 
the calculation of positive eigenvalues is done in the biorthogonal complementary invariant subspace associated 
with the generalized nullspace $\mathcal{V}_0$, 
so the numerical approximation $\mathcal{V}_0$ is of great importance and should be computed with great accuracy in the very beginning.
As shown in Theorem \ref{thm:dsd_biorth_is},
all zero eigenvalues and their corresponding eigenvectors $\{\widetilde{x}^0_i\}_{i=1}^r$ of 
the symmetric matrix $K$ can be obtained using existing symmetric eigensolvers,
such as Lanczos algorithm and \revise{subspace projection method}
\cite{Hernandez2005SLEPc,Li2023GCGE,Li2020parallel,Saad2011Numerical,Zhang2020generalized}.
Then we compute $\widetilde{y}^0_i$ by solving the SPD linear system $M\widetilde{y}^0_i=\widetilde{x}^0_i$
for $i=1,\dots,r$ with conjugate gradient method. 

If the numerical approximation of the generalized nullspace $\mathcal{V}_0$ is not accurate enough,
the computation of its biorthogonal complement may fail to meet required tolerance and it shall affect
the convergence and accuracy of the non-zero eigenvalues.
\end{remark}


\section{BOSP algorithm}\label{sec:biorth_gcg}

In this section, 
we shall first describe the Bi-Orthogonal Structure-Preserving algorithm 
and present the detailed eigensolver.

\subsection{Biorthogonalization algorithm}
\label{sec:mgs}
Let us first recall the {CGS-Biorth} and 
{MGS-Biorth} algorithm 
\cite{Kohaupt2014Introduction,Ruhe1983Numerical,Saad2003Iterative}.
We have at hand 
two vectors $x,\ y\in\mathbb{R}^{n}$,
two biorthogonal matrices 
$P=[p_1,\cdots,p_{\ell-1}],\ Q=[q_1,\cdots,q_{\ell-1}]\in\mathbb{R}^{n\times {(\ell-1)}}, {\ell}<n$ satisfying that matrix $[P,x]^{\top}[Q,y]$ is invertible.
Our goal is to expand column spaces of $P$ and $Q$ by adding $x$ and $y$ respectively.
That is to construct two vectors $p_{\ell},q_{\ell}\in \mathbb{R}^n$ from $x,y$ such that
$P^{\top}q_{\ell} = Q^{\top}p_{\ell}={\bf 0}$ and $p_{\ell}^{\top}q_{\ell}=1$. 

Similar to the classical and modified Gram-Schmidt orthogonalization procedures, 
we obtain the CGS-Biorth and MGS-Biorth algorithms as follows\\
\begin{minipage}{0.48\textwidth}
\begin{equation*}
\begin{split}
&{\hspace{0.1cm}\mbox{CGS-Biorth}}\\
&\tilde{x} = x,\ \ \tilde{y} = y\\	
&\mbox{for } j=1 \mbox{ to } {\ell}-1 \\
&\ \ \ \ \tilde{x} = \tilde{x}- p_j \langle q_j,{x} \rangle\\
&\ \ \ \ \tilde{y} = \tilde{y}- q_j \langle p_j,{y} \rangle\\
&\mbox{end }
\end{split}
\end{equation*}
\end{minipage}
\begin{minipage}{0.48\textwidth}
\begin{equation*}
\begin{split}
&\hspace{0.1cm}{\mbox{MGS-Biorth}}\\
&\tilde{x} = x,\ \ \tilde{y} = y\\	
&\mbox{for } j=1 \mbox{ to } {\ell}-1 \\
&\ \ \ \ \tilde{x} = \tilde{x}- p_j \langle q_j,\tilde{x} \rangle\\
&\ \ \ \ \tilde{y} = \tilde{y}- q_j \langle p_j,\tilde{y} \rangle\\
&\mbox{end }
\end{split}
\end{equation*}
\end{minipage}\\
Then we set $p_{\ell} = \mbox{sign}(\eta)\tilde{x}/\sqrt{|\eta|},\, q_{\ell} = \tilde{y}/\sqrt{|\eta|},\, \eta =\tilde{x}^{\top}\tilde{y}$ such that $p_{\ell}^{\top}q_{\ell}=1.$ 
We obtain two biorthogonal matrices $\widetilde{P}=[P,p_{\ell}],\widetilde{Q}=[Q,q_{\ell}]$, i.e., $\widetilde{P}^\top \widetilde{Q}=I_{\ell}$.
It is obvious that both algorithms share the same computational complexity ($8n(\ell-1)$ flops), 
but their numerical stability performance, regarding the rounding off pollution on biorthogonality, are quite different with modified algorithm being more robust.

Taking the roundoff accumulation into account and 
assuming the rounding errors are the sole cause of biorthogonality loss, 
we can formulate the biorthogonality property as
$P^{\top}Q = I_{\ell-1}+L+U,$
where $L$/$U$ is a strictly lower/upper triangular matrix with all
the elements being as small as the roundoff.
Note that vectors $\tilde{x},\tilde{y}$ can be written in the following way
\begin{equation}
\label{BiorthCoef}
\tilde{x} =  x-Pr, \qquad \tilde{y} = y-Qs, \quad r,s\in \mathbb{R}^{\ell-1}.
\end{equation}
To find coefficients $r,s$ such that  $Q^{\top}\tilde{x} = P^{\top}\tilde{y}={\bf 0}$, 
one actually solves the following linear systems
\begin{equation}
	\label{equ:ls_biorth}
	Q^{\top}P r = Q^{\top}x,\quad  \mbox{ and }\quad  P^{\top}Q s = P^{\top}y.
\end{equation}
As suggested in \cite{Ruhe1983Numerical}, 
the classical and modified biorthogonalization algorithms
correspond to applying Gauss-Jacobi and Gauss-Seidel iterations with zero initial guesses $r^{(0)}=s^{(0)}={\bf 0}$ for one iteration ending up with $r=r^{(1)},s =s^{(1)}$, given explicitly\\
\begin{minipage}{0.45\textwidth}
\begin{equation*}
\begin{split}
	r^{(1)}&= Q^{\top}x, \\
	s^{(1)}&= P^{\top}y,\\
	&\hspace{-0.8cm}{\mbox{(Gauss-Jacobi)}}
\end{split}
\end{equation*}
\end{minipage}
\begin{minipage}{0.45\textwidth}
\begin{equation*}
\begin{split}
	r^{(1)}&= (I_{\ell-1}+U^{\top})^{-1}Q^{\top}x, \\
	s^{(1)}&= (I_{\ell-1}+L)^{-1}P^{\top}y,\\
	&\hspace{0.1cm}{\mbox{(Gauss-Seidel)}}
\end{split}
\end{equation*}
\end{minipage}\\[0.2em]
as combination coefficients for the classical and modified algorithm respectively. 

It is known that if $P^{\top}Q$ has some properties
as described in Corollary 4.6 \cite{Varga2000Matrix}
or Corollary 4.16 \cite{Kress2012Numerical}, 
the spectral radius of Gauss-Seidel iteration matrix, i.e.,
$-(I_{\ell-1}+U^{\top})^{-1}L^{\top}$ or $-(I_{\ell-1}+L)^{-1}U$,
is square of that for Gauss-Jacobi iteration matrix, i.e.,
$-(L+U)^{\top}$ or $-(L+U)$.
In other words, the modified algorithm always performs better than classical algorithm.


To be more specific, we present a detailed step-by-step description of MGS-Biorth in Algorithm \ref{alg:biorth}. 

\begin{algorithm}[!htbp]
	\setstretch{1.0}
\caption{Modified Gram-Schmidt biorthogonalization} 
\label{alg:biorth}
	\KwIn{ $X=[x_1,\cdots,x_{m}],\ Y=[y_1,\cdots,y_{m}]\in\mathbb{R}^{n\times {m}}$ with ${m}\leq n$,
	where $X^{\top}Y$ is invertible.}
	\KwOut{ $P=[p_1,\cdots,p_{m}]$ and $Q=[q_1,\cdots,q_{m}]$ satisfying $P^{\top}Q=I_{m}$.}
	\vspace{0.2cm}
	$\eta = {x_1^{\top}y_1}$\;
	$r_{11} = \mbox{sign}(\eta)\sqrt{|\eta|}$ and $s_{11} = \sqrt{|\eta|}$\;
	$p_1=x_1/r_{11}$ and $q_1=y_1/s_{11}$\;
	\For { ${\ell} = 2 : {m}$}{
		$p_{\ell}=x_{\ell}$ and $q_{\ell}=y_{\ell}$\;
		\For { $j = 1 : {\ell}-1$}{
			$r_{j\ell} = {q_j^{\top}p_{\ell}}$\;
 	$p_{\ell} = p_{\ell}-{r}_{j\ell}{p}_{j}$\;
 	${s}_{j\ell} = p_{j}^{\top} q_{\ell}$\;
 	$q_{\ell} = q_{\ell}-{s}_{j\ell}{q}_{j}$\;
	}
	$\eta = {p_{\ell}^{\top}q_{\ell}}$\;
	$r_{\ell\ell} = \mbox{sign}(\eta)\sqrt{|\eta|}$ and $s_{\ell\ell} = \sqrt{|\eta|}$\;
	$p_{\ell}=p_{\ell}/r_{\ell\ell}$ and $q_{\ell}=q_{\ell}/s_{\ell\ell}$\;
	}
\end{algorithm}

\vspace{0.2cm}
\begin{remark}[Exception handling of ill-conditioned matrices]
	\label{rem:biorth_ill_conditioned}
  In Algorithm \ref{alg:biorth},
  when $X^{\top}Y$ is ill-conditioned,
there exists some $s_{\ell\ell}$ quite close to zero, then the biorthogonalization procedure will be unstable.
In practice, \revise{such vectors $x_{\ell}$ and $y_{\ell}$ are simply discarded and the iteration continues.} Consequently, the number of columns of $P$ and $Q$ might be less than ${m}$.
\end{remark}

When applied to matrix $P,Q$, the CGS-Biorth and MGS-Biorth algorithm have the same computational complexity, that is, $4\,n{m}^2$ flops.
The MGS-Biroth algorithm proves to be much more stable in numerical practice.
Here we carry out a detailed numerical comparison to show the biorthogonal preservation capabilities of both methods.
Matrices $X,\ Y\in\mathbb{R}^{n\times {m}}$ are constructed using the first ${m}$ columns of Hilbert matrix of order $n \times n$,
and Lauchli matrix \cite{Higham1995test} of order $n\times(n-1)$ with ${m}=n/2$. 
We adopt the biorthogonal residual $\Vert {P}^{\top}{Q}-I_{m}\Vert_2$ to measure the loss of biorthogonality,
where $P$ and $Q$ are matrices obtained from $X$ and $Y$, and $\Vert  \cdot \Vert_2$ is the discrete $l^2$ norm.
The condition number of matrix $A$, denoted by $\kappa(A)$,
is defined as
$\kappa(A) := \max_{\Vert v\Vert_2=1}\Vert Av\Vert_2/\min_{\Vert v\Vert_2=1}\Vert Av\Vert_2$.

\revise{Table \ref{tab:biorth_ill_conditioned} presents the condition numbers of matrices $X$, $Y$, $X^\top Y$ 
and the biorthogonal residual for different $n$,
from which we can observe that both methods experience a loss of biorthogonality as the condition number of
 $X^{\top}Y$ increases.}
\revise{Similar to the orthogonalization procedure, the MGS-Biorth algorithm is capable of maintaining a relatively high level of biorthogonality, 
even when the condition number is very large.}
Furthermore, from our extensive numerical observations not shown here, we conjecture
\revise{the following estimation holds for MGS-Biorth:}
\begin{equation} \label{equ:bcgs}
   \Vert P^{\top}Q-I_{m}\Vert_2
	=\mathcal{O(\epsilon)}\sqrt{\kappa(X^{\top}Y)},
\end{equation}
where $\epsilon$ denotes the rounding error. 
A comprehensive theoretical investigation is now going on and we shall report it later in another separate article.
\begin{table}[!htbp]
	\def\temptablewidth{1.0\textwidth}\tabcolsep 10pt
\centering
	{\rule{\temptablewidth}{1pt}}
	\resizebox{\linewidth}{!}{
	\begin{tabular}{c|ccc|ccc}
    $n$ &  $\kappa(X)$  & $\kappa(Y)$ & $\sqrt{\kappa(X^{\top}Y)}$ & \makecell{CGS-Biorth \\ $\Vert {P}^{\top}{Q}-I_{m}\Vert_2$} & \makecell{MGS-Biorth \\ $\Vert {P}^{\top}{Q}-I_{m}\Vert_2$} \\
		\hline\rule{0pt}{15pt}
\,4 & 1.32E+01 & 1.41E+03 & 1.34E+04 & 2.42E-12 & 2.42E-12 \\   
~8 & 4.42E+03 & 2.00E+03 & 4.82E+05 & 1.67E-07 & 2.32E-08 \\     
12 & 1.67E+06 & 2.44E+03 & 1.67E+07 & 3.48E-02 & 8.79E-07 \\
16 & 6.50E+08 & 2.82E+03 & 5.74E+08 & 4.98E+01 & 2.03E-04 \\
20 & 2.57E+11 & 3.16E+03 & 1.96E+10 & 9.18E+01 & 3.89E-03 \\
\end{tabular}
	}
	{\rule{\temptablewidth}{1pt}}
  \caption{The loss of biorthogonality of CGS-Biorth and MGS-Biorth algorithm.}
\label{tab:biorth_ill_conditioned}
\end{table}

\begin{remark}[High performance implementation]
To achieve better efficiency and parallel scalability on high performance platform, such as distributed memory multiprocessors, 
one may develop a block version of MGS-Biorth just as the block Gram-Schmidt orthogonalization \cite{Stewart2008Block},
utilizing level 3 BLAS to implement matrix-matrix product. 
For brevity, we omit details here and refer the readers to \cite{Li2023GCGE,Stewart2008Block}.
\end{remark}

\subsection{The main algorithm}

Based on the direct sum decomposition of biorthogonal invariant subspaces and the stable and efficient implementation of biorthogonalization,
we design a subspace iteration with projection method for solving LREP \eqref{equ:xy_algebra} prescribed by Algorithm \ref{alg:biorth_gcg4d}, 
and provide a matrix-free interface for large-scale problem.
There are four critical issues to address:
\begin{itemize}
	\item[(1)] How to deal with zero eigenvalues when $K$ is a SPSD matrix.
	\item[(2)] How to construct a basis of an approximate invariant subspace of $H$.
	\item[(3)] How to deflate out converged eigenpairs such that they \textsl{do not} participate in subsequent iterations.
	\item[(4)] How to maintain stability, efficiency, and parallel scalability when a large number of eigenpairs are requested. 
\end{itemize}

Briefly speaking, 
\comm{according to Lemma \ref{lem:zero},
we can first compute the generalized nullspace} $\mathcal{V}_0$ of $H$ in Theorem \ref{thm:dsd_biorth_is} when $H$ has zero eigenvalues,
and then construct a sequence of approximate invariant subspaces that lie in $\mathcal{V}_{1}\oplus \cdots \oplus \mathcal{V}_{n-r}$.
The bases of approximate invariant subspaces are constructed using Algorithm 
\ref{alg:biorth} 
so as to guarantee biorthogonality.
Once $\mathcal{V}_{1}\oplus \cdots \oplus \mathcal{V}_{{\ell}}$ are available for some ${\ell}\geq1$,
the subsequent approximate invariant subspaces shall be constructed in the biorthogonal complement, i.e.,
$\mathcal{V}_{{\ell}+1}\oplus \cdots \oplus \mathcal{V}_{n-r}$. 
Then the converged eigenpairs will not participate in subsequent iterations naturally.
When a large number of eigenpairs are required, 
we propose an effective moving mechanism to reduce memory consumption and improve the efficiency in Section \ref{sec:moving_mechanism}.

When $H$ has zero eigenvalues, i.e., ${\rm rank}(K)=r<n$, 
a basis of $\mathcal{V}_0$ in \eqref{equ:decomp_invar_subsp} needs to be calculated first. 
As presented in \textbf{Remark} \ref{rem:V0}, we get $X_0$ and $Y_0$ by solving the symmetric eigenvalue problems $KX_0={\bf 0}$
and the SPD linear equations $MY_0=X_0$,
where $X_0,\ Y_0\in\mathbb{R}^{n\times r}$
and $X_0^{\top}Y_0 = I_r$.
Thus, we obtain a basis of $\mathcal{V}_0$, 
\revise{which is formed by the columns of}
$\begin{bsmallmatrix} Y_0 & O \\ O & X_0 \end{bsmallmatrix}$.

To construct a subspace lying in $\mathcal{V}_{1}\oplus \cdots \oplus \mathcal{V}_{n-r}$,
we randomly generate two large matrices
$U=[X,P,W]$, $V=[Y,Q,Z]$ with $X,\ P,\ W,\ Y,\ Q,\ Z\in\mathbb{R}^{n\times n_{e}}$, assuming
that 
$U^\top V$ is invertible. 
Applying 
a biorthogonalization procedure (Algorithm \ref{alg:biorth}) to $U$ and $V$, we can get
$X_0^{\top}V=Y_0^{\top}U=O$, and $U^{\top}V=I_d$ with $d = 3 \cdot n_{e}$,
from which we can construct \comm{a $2d$-dimensional search subspace} 
\begin{equation}
\label{equ:subspace}
\mathcal V = {\rm span}\left(\begin{bmatrix} V & O \\ O & U \end{bmatrix}\right). 
\end{equation}
The original LREP \eqref{equ:xy_algebra} is projected into the above subspace $\mathcal V$
based on an oblique projector 
$\Pi:=
\begin{bsmallmatrix} V & O \\ O & U \end{bsmallmatrix}
\begin{bsmallmatrix} U & O \\ O & V \end{bsmallmatrix}^{\top}$
\cite{Saad2011Numerical}
such that the following properities holds 
\begin{equation*}
	\Pi
	\begin{bmatrix} y \\ x \end{bmatrix}\in\mathcal{V}
~\mbox{ and }~
(I_{2n}-\Pi)\begin{bmatrix} y \\ x \end{bmatrix} \pb 
\mathcal{V},\quad
\forall~\begin{bmatrix}y\\x\end{bmatrix}\in \mathbb{R}^{2n}.
\end{equation*}
Therefore, we obtain a structure-preserving projection matrix $\hat{H}  \in \mathbb{R}^{2d \times 2d}$, defined as
\begin{equation}
	\label{equ:hat_H}
	\hat{H} = 	
\begin{bmatrix} U & O \\ O & V \end{bmatrix}^{\top}
H
\begin{bmatrix} V & O \\ O & U \end{bmatrix}
=\begin{bmatrix}
	O & U^{\top}KU \\ V^{\top}MV & O
\end{bmatrix} \in \mathbb R^{2d\times 2d}.
\end{equation}
A similar formula is also given by Bai et al. in \cite{Bai2012Minimization}.
It is easy to prove that $U^{\top}KU$ and $V^{\top}MV$ are both SPD. 
Therefore, there are no zero eigenvalues in $\hat H$. This means that the zero eigenvalues are excluded with biorthogonalization.
In fact, if there exists a \textsl{non-zero} vector ${x}\in\mathbb{R}^{d}$ satisfying ${x}^{\top} U^{\top}KUx=0$, $Ux$ is a linear combination of the columns of $X_0$. 
Then there exists a vector $c\in\mathbb{R}^{r}$ such that $U{x} = X_0c$. 
\revise{Multiplying both sides on the left by $V^\top$} immediately leads to a contradiction ${x} = V^{\top}U{x} = V^{\top}X_0c= {\bf 0}$.
The positivity of $V^\top M V$ can also be proved in a similar way.

For the small-scale LREP \eqref{equ:small_eigen}, we adopt the structure-preserving method proposed by 
Shao and Benner et al. \cite{Benner2022efficientbse,Shao2016Structure} to
compute the first $n_{e}$ smallest positive eigenvalues \revise{and their corresponding eigenvectors, which satisfy}
\begin{equation}\label{equ:small_eigen}
\hat{H}
\begin{bmatrix}
	\hat{Y} \\ \hat{X}
\end{bmatrix}=
\begin{bmatrix}
	\hat{Y} \\ \hat{X}
\end{bmatrix}\Lambda,
\end{equation}
where $\Lambda = {\rm diag}\{\hat{\lambda}_1,\ldots,\hat{\lambda}_{n_{e}}\}$ and $\hat X,\ \hat Y\in \mathbb R^{d\times {n_{e}}}$ satisfy the biorthogonality condition $\hat{X}^{\top}\hat{Y}=I_{n_{e}}$. 
We can use $\hat{X}$ and $\hat{Y}$ to construct approximate eigenvectors, denoted as $\widetilde{X},\ \widetilde{Y}\in \mathbb R^{n\times {n_{e}}}$, 
to the original LREP \eqref{equ:xy_algebra}, as 
\begin{equation}
\label{eigenVector-pullback}
\widetilde{Y}=V\hat{Y},\quad \widetilde{X}=U\hat{X}.
\end{equation}
It is easy to check that the biorthogonality is kept in $\widetilde{X}$ and $\widetilde{Y}$, i.e.,
\begin{equation*}
	\widetilde{X}^{\top}\widetilde{Y} = \hat{X}^{\top}U^{\top}V\hat{Y} = \hat{X}^{\top}\hat{Y} = I_{n_{e}},
\end{equation*}
and each column of residual 
\begin{equation*}
	\label{equ:res_eig}
	\begin{bmatrix} R_1 \\ R_2\end{bmatrix}:=
	H
	\begin{bmatrix} \widetilde{Y}\\ \widetilde{X} \end{bmatrix}
	-\begin{bmatrix} \widetilde{Y}\\ \widetilde{X} \end{bmatrix}\Lambda
	=
	H
	\begin{bmatrix} V & O \\ O & U \end{bmatrix}
	\begin{bmatrix} \hat{Y} \\ \hat{X} \end{bmatrix}
-
	\begin{bmatrix} V & O \\ O & U \end{bmatrix}
	\begin{bmatrix} \hat{Y} \\ \hat{X} \end{bmatrix} \Lambda
\end{equation*} 
is biorthogonal to the 
subspace $\mathcal{V}$ defined by \eqref{equ:subspace},
i.e., $R_1^\top U= R_2^\top V = O$.


\vspace{0.2cm}
\begin{remark}
	As is already proved by  
	Bai et al. \cite{Bai2012Minimization}, one can seek reliable approximations to the first few smallest positive eigenvalues, i.e.,
	$\lambda_i, \, 1 \leq i \leq {n_{e}}$, simultaneously 
	by minimizing ${\rm trace}(\overline{X}^{\top}K\overline{X} + \overline{Y}^{\top}M\overline{Y})$
	subject to biorthogonal constraint $\overline{X}^{\top}\overline{Y}=I_{n_{e}}$ 
	with ${\rm span}(\overline{X}) \subset {\rm span}(U)$ and ${\rm span}(\overline{Y}) \subset {\rm span}(V)$.  That is to say, 
 \begin{equation}
		\label{equ:min_principle}
		\min_{\scriptstyle 
			\substack{
				{\rm span}(\overline{X})\subset {\rm span}(U)\\
				{\rm span}(\overline{Y})\subset {\rm span}(V)\\
			\overline{X}^{\top}\overline{Y}=I_{n_{e}}} 
		}
		\frac{1}{2}
		{\rm trace}\big(\overline{X}^{\top}K\overline{X} + \overline{Y}^{\top}M\overline{Y}\big).
	\end{equation}
It is easy to prove that the numerical solutions obtained by our method in Algorithm \ref{alg:biorth_gcg4d}, 
denoted by $\widetilde{X}$ and $\widetilde{Y}$,  indeed minimize the target trace function \eqref{equ:min_principle}.
\end{remark}


If \comm{the norm of residual $\begin{bsmallmatrix}R_1 \\ R_2 \end{bsmallmatrix}$ is smaller than a pre-defined tolerance}, the iteration terminates and the first $n_{e}$ smallest positive approximate eigenvalues converge. In this case, $\mathcal V$ is indeed an invariant subspace of $H$. Otherwise,  
setting
\begin{equation}
	\label{equ:hat_pq}
\hat{P} =(I_{d}-\hat{X}\hat{Y}^{\top})
(\hat{X}-\begin{bsmallmatrix} I_{n_{e}} \\ O \\ O\end{bsmallmatrix}),\quad
\hat{Q} =(I_{d}-\hat{Y}\hat{X}^{\top})
(\hat{Y}-\begin{bsmallmatrix} I_{n_{e}} \\ O \\ O \end{bsmallmatrix})\in\mathbb{R}^{d\times{n_{e}}}, 
\end{equation}
we do the small-scale biorthogonalization for 
$\hat{P}$ and $\hat{Q}$. 
New searching directions are updated as  $\widetilde{P}=U\hat{P}$, $\widetilde{Q}= V\hat{Q}$.
Using $[\hat{X},\hat{P}]^{\top}[\hat{Y},\hat{Q}]=I_{2\cdot{n_{e}}}$, it is easy to check that $[X_0, \widetilde{X}]^{\top}\widetilde{Q}
=[Y_0, \widetilde{Y}]^{\top}\widetilde{P} = O$ and $\widetilde{P}^{\top}\widetilde{Q}=I_{n_{e}}$.
The large-scale biorthogonalization for $\widetilde{P}$ and $\widetilde{Q}$ is then transformed into a small-scale biorthogonalization for $\hat{P}$ and $\hat{Q}$, 
and it definitely leads to a remarkable efficiency boost. 
In addition, we observe ${\rm span}([\widetilde{X}, X]) = {\rm span}([\widetilde{X}, \widetilde{P}])$ and ${\rm span}([\widetilde{Y}, Y]) = {\rm span}([\widetilde{Y}, \widetilde{Q}])$, 
which means that the updated $\widetilde{P}$ and $\widetilde{Q}$
contain information of eigenvectors from previous iteration steps.

\comm{To update the search subspace $\mathcal{V}$}, 
we propose to update $w_i,\ z_i\in \mathbb R^{n}$, 
which is the $i$-th column of matrices $W$ and $Z$ respectively,
by solving the following linear system
\begin{equation}
\label{equ:residual_ZW}
(H-\hat{\lambda}_i I_{2n}) \begin{bmatrix} z_{i} \\ w_{i} \end{bmatrix} = -(H-\hat{\lambda}_i I_{2n})
\begin{bmatrix} \widetilde{y}_i \\ \widetilde{x}_i \end{bmatrix},
\end{equation}
which is equivalent to
\begin{equation}
\label{equ:residual_ZW_2}
\begin{bmatrix} M & -\hat{\lambda}_iI_n \\ -\hat{\lambda}_iI_n & K \end{bmatrix}
\begin{bmatrix} z_{i} \\ w_{i}\end{bmatrix}
= \begin{bmatrix} \hat{\lambda}_i\widetilde{x}_i-M\widetilde{y}_i \\  \hat{\lambda}_i\widetilde{y}_i-K\widetilde{x}_i \end{bmatrix}, 
\end{equation}
where 
\comm{$\hat{\lambda}_i$ is obtained by solving the small-scale LREP \eqref{equ:small_eigen}},
and 
$\widetilde{x}_i$ and $\widetilde{y}_i$ is the $i$-th column of $\widetilde{X}$ and $\widetilde{Y}$, respectively.
Bear in mind that we just need to add new search directions, so there is no need to solve the above linear system with very high accuracy.
In practice, we compute $w_i,z_i$ with block Gauss-Seidel iteration for \eqref{equ:residual_ZW_2} by solving
a series of symmetric linear equations 
\begin{equation}\label{equ:block_GS}
MZ^{(j)} = W^{(j-1)}\Lambda
+(\widetilde{X}\Lambda - M\widetilde{Y}), \quad
KW^{(j)} = Z^{(j)}\Lambda
+(\widetilde{Y}\Lambda - K\widetilde{X}),
\end{equation}
for $j=1,\ldots,{\tt ngs}$,
where $W^{(j)},\ Z^{(j)}\in\mathbb{R}^{n\times{n_{e}}}$ with zero matrix as initial guess, i.e., $W^{(0)} = O$, 
and ${\tt ngs}$ denotes the number of Gauss-Seidel iteration steps. 
\revise{The linear system in equation~\eqref{equ:block_GS} is solved using the conjugate gradient method with a zero initial guess, 
a stopping tolerance of $10^{-2}$, and a maximum of $20$ iterations.
}

\comm{
Then, we carry out biorthogonalization to $W$/$Z$ with respect to previous search directions as follows 
\begin{align}
\widetilde{W}& =(I_{n} - [X_0,\widetilde{X},\widetilde{P}] [Y_0,\widetilde{Y},\widetilde{Q}]^{\top})W^{(\tt ngs)},
	\label{equ:update_W}\\
\widetilde{Z}& =(I_{n} - [Y_0,\widetilde{Y},\widetilde{Q}] [X_0,\widetilde{X},\widetilde{P}]^{\top})Z^{(\tt ngs)},
	\label{equ:update_Z}
\end{align}
\comm{
	which costs $16n\cdot{n_{e}}^2$ flops and is accelerated using the level 3 BLAS routines,
}
followed by a large-scale biorthogonalization for 
$\widetilde{W}$ and $\widetilde{Z}$ using Algorithm \ref{alg:biorth},
}
\comm{which requires another $4n\cdot{n_{e}}^2$ flops.}
Finally, 
we update 
$U=[X,P,W]$ 
and 
$V=[Y,Q,Z]$ 
by 
$[\widetilde{X},\widetilde{P},\widetilde{W}]$
and
$[\widetilde{Y},\widetilde{Q},\widetilde{Z}]$,
respectively.

In summary, the BOSP algorithm, 
presented in Algorithm \ref{alg:biorth_gcg4d},
generates two triple blocks 
$U = [X,P,W]$ and $V = [Y,Q,Z]$ with $U^{\top}V=I$,
where $\begin{bsmallmatrix}Y\\X \end{bsmallmatrix}$
stores the current approximations of eigenvectors,
$\begin{bsmallmatrix}Q\\P \end{bsmallmatrix}$ saves the information of eigenvectors at the previous iteration step,
and $\begin{bsmallmatrix}Z\\W \end{bsmallmatrix}$ saves 
the approximate Newton search direction
at the point $\begin{bsmallmatrix}Y\\X \end{bsmallmatrix}$.
Similar ideas were applied to symmetric eigenvalue problems 
\cite{Li2023GCGE,Li2020parallel,Zhang2020generalized}.

\begin{algorithm}[!htb]
\setstretch{1.2}
\caption{BOSP algorithm} \label{alg:biorth_gcg4d}
\KwIn{Given the number of requested eigenpairs $n_{e}\in \mathbb{N}$, convergence tolerance $\varepsilon_{\tt tol}$ and 
the generalized nullspace $\mathcal{V}_0 = {\rm span}(\begin{bsmallmatrix} Y_0 & O \\ O & X_0 \end{bsmallmatrix})$,
where $KX_0=O$, $MY_0=X_0$ and $X_0^{\top}Y_0=I_{r}$ with ${\rm dim}(\mathcal{N}(K)) = r>0$ and $X_0, Y_0\in\mathbb{R}^{n\times r}$.} 
\KwOut{The first $n_{e}$ smallest positive eigenvalues and their corresponding eigenvectors $\left\{\Lambda; \begin{bsmallmatrix} {Y} \\ {X}\end{bsmallmatrix}\right\}$,
	satisfying $KX=Y\Lambda$, $MY=X\Lambda$ and $X^{\top}Y=I_{n_{e}}$, 
	where $X,\ Y\in\mathbb{R}^{n\times {n_{e}}}$ and $\Lambda = {\rm diag}\{\hat{\lambda}_1,\cdots,\hat{\lambda}_{n_{e}}\}$ with 
$0< \hat{\lambda}_1 \leq \cdots\leq \hat{\lambda}_{n_{e}}$.} 
\vspace{0.2cm}
\textbf{Step 1:} Generate $6\cdot{n_{e}}$ column vectors randomly to construct $U=[X,P,W]$, $V=[Y,Q,Z] \in \mathbb{R}^{n\times d}$, 
where $X,\ P,\ W,\ Y,\ Q,\ Z \in \mathbb{R}^{n\times {n_{e}}}$ and $d = 3\cdot {n_{e}}$. \\

\textbf{Step 2:} Update $U = (I_n - X_0 Y_0^{\top})U$, $V = (I_n - Y_0 X_0^{\top})V$ and biorthogonalize $U$ and $V$ by Algorithm 
\ref{alg:biorth}. \\

Set $\varepsilon_i =1, \mbox{ for } i=1,\ldots, {n_{e}}$. \\

\While{not converged (i.e., $\max\left\{\varepsilon_{i}\right\}_{i=1}^{n_{e}} >  \varepsilon_{\tt tol}$)}{
   \vspace{0.2em}
\textbf{Step 3:} Generate matrix $\hat{H}$, compute the small-scale LREP \eqref{equ:small_eigen}
to obtain $\left\{\Lambda;\begin{bsmallmatrix}\hat{Y} \\ \hat{X}\end{bsmallmatrix}\right\}$, and update $\widetilde{X}=U\hat{X}$ and $\widetilde{Y} = V\hat{Y}$.\\
\textbf{Step 4:} Compute the residuals $\varepsilon_{i}$ = $\frac{\Vert H\xi_i - \hat{\lambda}_i \xi_i \Vert_2}{(1+ \hat{\lambda}_i) \Vert  \xi_i \Vert_2}$ where
	$\xi_i$ is the $i$-th column of $\begin{bsmallmatrix} \widetilde{Y} \\ \widetilde{X}\end{bsmallmatrix}$. 
	If converged, update $X=\widetilde{X}$, $Y=\widetilde{Y}$ and the while-loop stops.\\
\textbf{Step 5:} Update $\hat{P}$ and $\hat{Q}$ by \eqref{equ:hat_pq} and biorthogonalize $\hat{P}$ and $\hat{Q}$. Update $\widetilde{P}=U\hat{P}$, $\widetilde{Q}=V\hat{Q}$.\\
\textbf{Step 6:} Update new search directions $[\widetilde{W},\widetilde{Z}]$ by 
\eqref{equ:block_GS} 
and biorthogonalize them using \eqref{equ:update_W} and \eqref{equ:update_Z}.\\
\textbf{Step 7:} Update $U = [\widetilde{X},\widetilde{P},\widetilde{W}]$ and $V =[\widetilde{Y},\widetilde{Q},\widetilde{Z}]$.
	}
\end{algorithm}

It is important to notice that 
we do biorthogonalization for $U$ and $V$,
which is transformed into a small-scale biorthogonalization for $\hat{P}$ and $\hat{Q}$ 
and a large-scale biorthogonalization for $\widetilde{W}$ and $\widetilde{Z}$.
\comm{
Roughly specking, the computational complexity decreases from 
$36\,n\cdot{n_{e}}^2$ to $20\,n\cdot{n_{e}}^2$.}
And preserving the biorthogonality of $U$ and $V$
is the critical factor of our algorithm 
to guarantee the numerical stability.
When high accuracy of eigenpairs is required, 
the columns of $[\widetilde{X},\widetilde{P},\widetilde{W}]$
or $[\widetilde{Y},\widetilde{Q},\widetilde{Z}]$ are nearly linear dependent
without the biorthogonalization for $\widetilde{W}$ and $\widetilde{Z}$.
In the next iteration step, 
the matrix $\hat{H}$ should then be defined as
\begin{equation*}
	\begin{bmatrix}
		O & (US_1^{-1})^{\top}K(US_1^{-1}) \\ (VS_2^{-1})^{\top}M(VS_2^{-1}) & O
	\end{bmatrix},
\end{equation*}
as done in LOBP4dCG method \cite{Bai2012Minimization,Bai2013Minimization,Bai2016Linear},
where $U^{\top}V=S=S_1^{\top}S_2$.
But under the circumstances stated above, $S$ is nearly singular and 
the operation of inverting $S_1$ and $S_2$
is numerically unstable.

\vspace{0.2cm}
\begin{remark}[Generalized LREP]
For a class of generalized LREP
\begin{equation}
   \label{equ:g_LREP}
\begin{bmatrix}	O & K \\ M & O \end{bmatrix}
\begin{bmatrix} y \\ x \end{bmatrix}
=\lambda 
\begin{bmatrix} B & O \\ O & B \end{bmatrix}
\begin{bmatrix} y \\ x \end{bmatrix},
\end{equation}
we can induce $B$-inner product 
$\langle  x,y\rangle_B = x^{\top}B y$,
where $B \in \mathbb{R}^{n\times n}$ is a SPD matrix and $x,\ y\in\mathbb{R}^n$.
The biorthogonalization algorithm presented in Section \ref{sec:mgs}  
can also be implemented in 
$B$-inner product instead of standard Euclidean inner product.
Then it is straightforward to extend Algorithm \ref{alg:biorth_gcg4d} 
to solve the generalized LREP \eqref{equ:g_LREP}.
\end{remark}

\subsection{The deflation mechanism}
\label{sec:moving_mechanism}

As stated in Theorem \ref{thm:biorth_compl}, 
the new search subspace $\mathcal V$, defined by \eqref{equ:subspace}, can be constructed within the biorthogonal complement of converged subspace. 
Consequently, the deflation mechanism is naturally realized without introducing any artificial parameters and converged eigenpairs \textsl{will not} participate 
in the subsequent computations. This form of deflation is precisely analogous to that used for symmetric eigenvalue problems \cite{Hernandez2005SLEPc,Li2023GCGE}. 
Most remarkably, there is no requirement for \textsl{a priori} spectral distribution of $H$.

In practice, when the first $\ell$ smallest positive eigenvalues have converged, we obtain the invariant subspace
$\mathcal V_0 \oplus \mathcal V_1 \oplus \cdots \oplus \mathcal V_\ell$ at hand, 
and \revise{the column vectors of $U$ and $V$ should be updated} in the biorthogonal complementary subspace,
namely,
\begin{equation*}
\mathcal V ~\subset~ \mathcal{V}_{\ell+1}\oplus \cdots \oplus \mathcal{V}_{n-r} ~ ~ \pb~ ~\mathcal V_0 \oplus \mathcal V_1 \oplus \cdots \oplus \mathcal V_\ell.
 \end{equation*}
The most CPU-intensive task is the small-scale eigenvalue problem (\textbf{Step 3} in Algorithm \ref{alg:biorth_gcg4d}), 
which is manifested by Table \ref{tab:nev5000sih4} in next section,
and it involves dense matrix generation \eqref{equ:hat_H}, eigenvalue computation \eqref{equ:small_eigen} 
and eigenvectors construction \eqref{eigenVector-pullback}.
To illustrate the complexity, we measure the computation costs in terms of matrix-vector multiplications, denoted as \textbf{mv}, 
vector inner product (denoted by \textbf{vp}) and floating point operations (flops).
The costs are detailed as follows:
\begin{itemize}
	\item[(1)]The matrix generation of $\widehat{H}$ in \eqref{equ:hat_H} needs $2d$ times \textbf{mv} and $(d^2\!+\!d)$ times \textbf{vp},
\item[(2)]The eigenvalue computation of \eqref{equ:small_eigen} is done using two Cholesky factorizations and one singular value decomposition 
	within $\mathcal{O}(d^3)$ flops \cite{Shao2016Structure},
\item[(3)]The eigenvectors construction of $\widetilde{X}$ and $\widetilde{Y}$ in \eqref{eigenVector-pullback} requires $\mathcal{O}(nd^2)$ flops.
\end{itemize}
To sum up, the total complexity of \textbf{Step 3} amounts to 
\begin{equation}\label{cmpl-lrep}
(2d)\cdot\textbf{mv} + (d^2\!+\!d)\cdot\textbf{vp}
+\mathcal{O}(d^3+nd^2).
\end{equation}

As mentioned earlier, the size of small-scale matrix $\widehat{H}$ is three times the numbe of $n_{e}$, i.e., $ d = 3\cdot{n_{e}}$.
It is worth noting that flops cost for one \textbf{mv} is at least $\mathcal{O}(n)$, which corresponds to sparse matrices 
that might arise from finite difference/element/volume spatial discretization, and flops for one \textbf{vp} is normally $\mathcal{O}(n)$.
Therefore, the leading order of the above complexity \eqref{cmpl-lrep} is at least 
$
\mathcal{O} (n\cdot n_{e}^2 + n_{e}^3).
$
In real applications, there are often cases where the matrix dimension is very large, i.e., $n\gg 1$
(for example, matrix resulted from discretization of $3$-dimension PDE eigenvalue problem), 
and/or the number of required eigenpairs is very large, i.e., $n_{e} \gg 1$ (for example, the Bethe-Salpeter equation).
An even more challenging case requires computing very large number of eigenpairs for a large-scale system ($n_{e} \gg 1~\& ~ n \gg 1$).
In such cases, the computation of small-scale problem bottlenecks the efficiency performance even  on distributed memory multiprocessors,
and, at the same time, it requires a prohibitively huge chunk of memory storage.

\begin{figure}[!htbp]
\centering
\subfigure[Computing the eigenpairs in batches]
{ \includegraphics[width=0.46\textwidth]{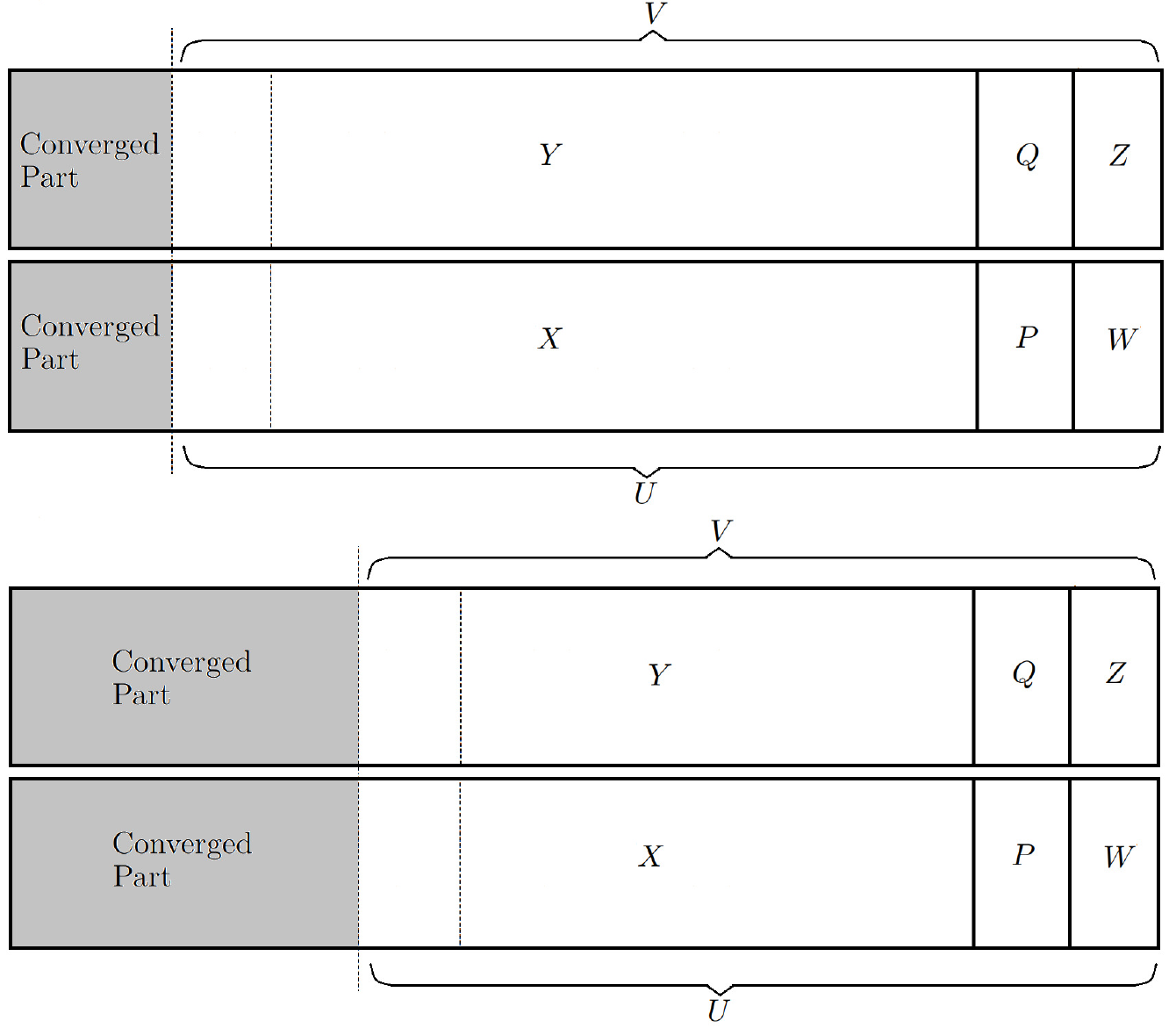} }
\subfigure[Moving $U$ and $V$ when $2\cdot n_b$ eigenpairs converged]
{ \includegraphics[width=0.46\textwidth]{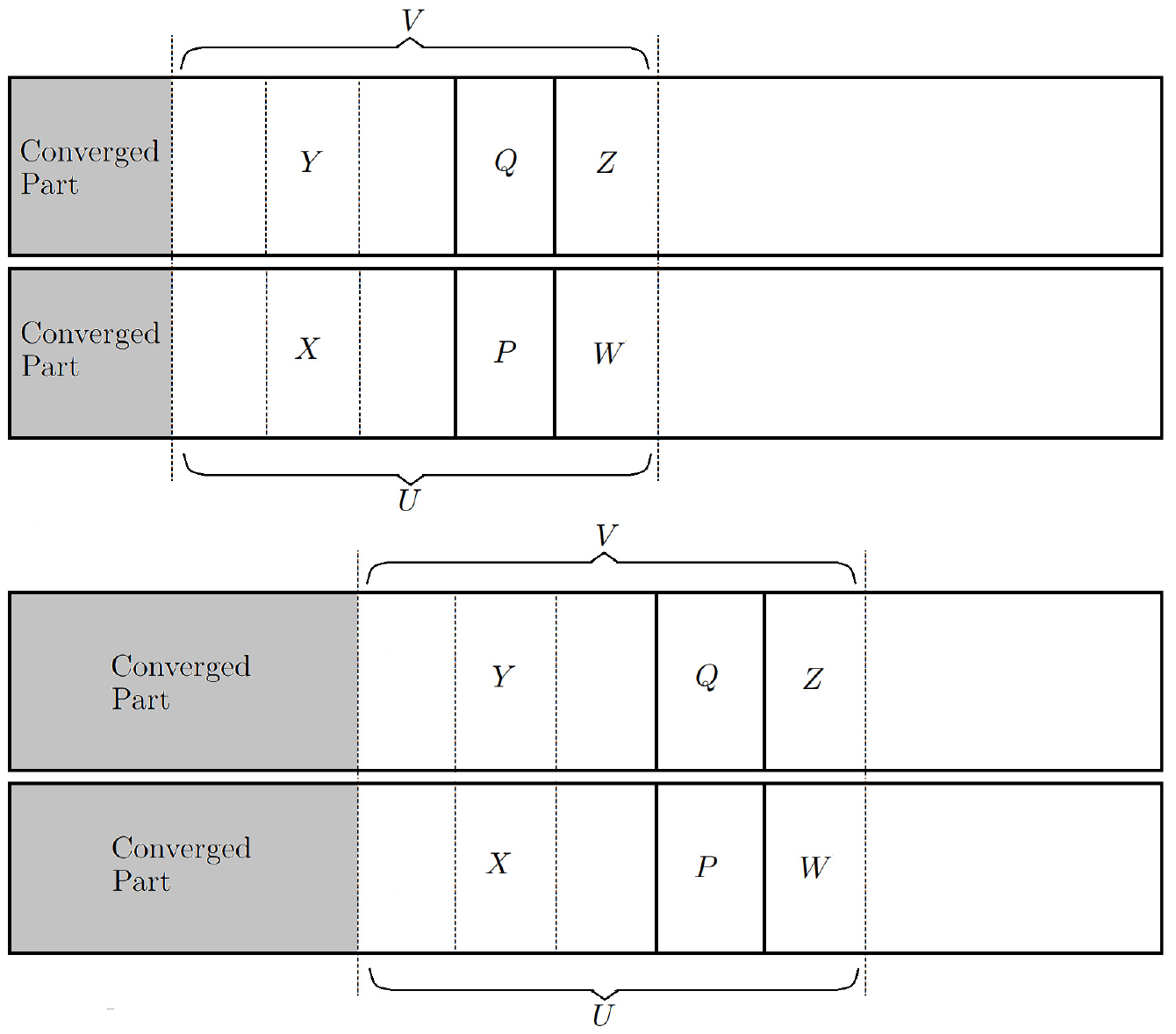} }
\caption{The batching and moving mechanism.}
\label{fig:xpw}
\end{figure}
To improve efficiency, for a fixed number of degrees of freedom $n$, we choose to decrease the dimension of search subspace $\mathcal V$, or equivalently the size 
of small-scale eigenvalue problem \eqref{equ:small_eigen}, by computing eigenpairs batch by batch.
This batching strategy is analogous to that for the symmetric eigenvalue problem 
\cite{Hernandez2005SLEPc,Li2023GCGE}.
To be precise, only the first $n_b$ unconverged eigenpairs are utilized to update the search vectors
$\widetilde{P},\ \widetilde{W},\ \widetilde{Q},\ \widetilde{Z}$ in {\bf Step 5-6}.
The batch size $n_b$ is set to the default value
$n_b=\min\{n_{e}/5, 150\}$.
A schematic illustration is shown in Figure \ref{fig:xpw} (a), where the number of columns of $P,\ Q,\ W,\ Z$ is all set as $n_b$.
Once the first $n_b$ eigenpairs have converged, we shall continue to compute the next $n_b$ pairs.
The dimension of search subspace $\mathcal{V}$ is reduced from $6\cdot n_{e}$ to $ 2\cdot(n_{e}+2\cdot n_b)$ at the initial batch 
and diminishes successively to $6\cdot n_b$ as an arithmetic progression with a common difference $2\cdot n_b$, that is,
\begin{align*}
   6\cdot n_{e} \quad &\longrightarrow& 
	 \boxed{\ 2\cdot(n_{e}+2\cdot n_b) \ \searrow \ 2\cdot(n_{e}+n_b) \ \searrow \ \cdots \ \searrow \ 6\cdot n_b \ }\\
\mbox{Algorithm \ref{alg:biorth_gcg4d}} & &\mbox{batching mechanism}\qquad\qquad\qquad\qquad
\end{align*}

To accelerate the computation efficiency further, we introduce a much more aggressive \textsl{moving} mechanism
to reduce the dimension of search subspace to a constant that is {\bf independent} of $n_{e}$.
The number of columns of $X$ and $Y$ \revise{is fixed at} $s\cdot n_b$ for some positive integer $s\geq2$.
The eigenpairs are computed in batches with $s=3$ as default value.
Once $2\cdot n_b$ eigenpairs have converged,  we integrate $[P,W]$ and $[Q,Z]$ into $X$ and $Y$, 
and update $[P,W]$ and $[Q,Z]$ following {\bf Step 5-6} of Algorithm \ref{alg:biorth_gcg4d}.
To sum up, the dimension of search subspace is kept unchanged as $2(s+2)\cdot n_b$
and the complexity \eqref{cmpl-lrep} becomes 
$
\mathcal{O}(n\cdot n_b^2 + n_b^3). 
$
Therefore, the memory costs are greatly reduced and the overall efficiency is improved to a great extent. 
A schematic illustration of the moving mechanism is depicted in Figure \ref{fig:xpw} (b) with $s=3$, from which 
we can see that the maximum number of columns of $U$ and $V$ is $5\cdot n_b$.
\comm{
Meanwhile, 
using the moving mechanism,
the computational complexity 
of biorthogonalization 
further decreases from 
$20n\cdot n_{e}^2$ flops
to
$8n\cdot n_{e}\cdot n_b+4n\cdot n_b^2$ flops
at each BOSP iteration.
}

We can see clearly that 
the moving mechanism reduces the subspace dimension greatly, 
especially when $n_{e}$ is very large, thus allowing for the computation of a very large number of eigenpairs. 
The stability and effectiveness of the moving mechanism rely heavily on the stable implementation of direct sum decomposition 
of biorthogonal invariant subspaces of $H$. 
The decomposition is successfully realized via MGS-Biorth algorithm, as detailed in Section \ref{sec:mgs}.
Similar moving mechanism has been applied to symmetric eigenvalue problems \cite{Hernandez2005SLEPc,Li2023GCGE}.

\vspace{0.2cm}
\begin{remark}[Balance of convergence and efficiency]
In practice, a larger search subspace $\mathcal{V}$ leads to a faster convergence,
but costs much more time in each small-scale LREP \eqref{equ:small_eigen}.
That is to say, one has to strike a balance between convergence and efficiency 
by adjusting the batch size $n_b$. 
We shall investigate the influence of $n_b$ on numerical performance in the coming section.
\end{remark}

\section{Numerical results}\label{sec:numerical}

In this section, 
we present some numerical results to illustrate 
the stability, efficiency, and parallel scalability of BOSP algorithm. 
\comm{Given the fact that some eigenvalues might be very close to zero, 
	we set the convergence criterion as the normalized residual is below 
 the given tolerance $\varepsilon_{\tt tol}$, that is,}
\begin{equation}
	\label{equ:tol}
	\frac{\Vert H \xi - \lambda \xi \Vert_2}{(1+\lambda) \Vert  \xi \Vert_2} < \varepsilon_{\tt tol},
	\mbox{ where } \ \xi = \begin{bmatrix}y \\ x\end{bmatrix}\in\mathbb{R}^{2n}.
\end{equation}
The algorithms \revise{are} implemented in C and run on a 3.00G Hz Intel(R) Xeon(R) Gold 6248R CPU with a 36 MB cache in Ubuntu GNU/Linux, 
unless otherwise stated. The OpenMP support is enabled with a default $8$ threads.

\subsection{Accuracy confirmation}
To illustrate the accuracy, 
we choose to present the absolute and relative errors of the first few smallest positive eigenvalues and their associated eigenvectors.
The exact and numerical eigenvalues are denoted as $\lambda_{\ell}^{\tt e}$ and $\lambda_{\ell}$, respectively. And 
the exact and numerical eigenvectors are 
$\xi_{\ell}^{\tt e} = \begin{bsmallmatrix}{y_{\ell}^{\tt e}}\\ {x_{\ell}^{\tt e}}\end{bsmallmatrix}$ and 
$\xi_{\ell} = \begin{bsmallmatrix}y_{\ell} \\ x_{\ell}\end{bsmallmatrix}$, respectively.
The errors are defined as 
$\varepsilon_{\ell} = \frac{|\lambda_{\ell}^{\tt e}-\lambda_{\ell}|}{\lambda_{\ell}^{\tt e}}$ and
$\eta_{\ell} = \Vert\frac{\xi^{\tt e}_{\ell}}{\Vert \xi^{\tt e }_{\ell}\Vert_2}-\frac{\xi_{\ell}}{\Vert \xi_{\ell}\Vert_2}\Vert_2$.
For convenience,
we define a matrix
\begin{equation*}
  \label{equ:mat_T}
  T(s) = 
  \begin{bsmallmatrix}
    2 & -1 & & & s \\
    -1 & 2 & -1 & &  \\
    & \ddots & \ddots & \ddots \\
    & & \ddots & \ddots & -1 \\
    s& & & -1& 2 \\
	\end{bsmallmatrix}_{n\times n}.
\end{equation*}


\begin{example}[Accuracy for SPD matrix case]\label{exam:acc_spd}
In this example, we consider a commonly-used SPD matrix $K$, resulting from finite difference discretization 
of the Laplacian operator with homogeneous Dirichlet boundary conditions in one-dimension space, 
and choose the second SPD matrix to match $K$. 
To be exact, 
$K = M = T(0)$.
\end{example}

It is easy to check that the exact positive eigenvalues of $H$ in LREP \eqref{equ:xy_algebra} are 
$\lambda_{\ell}^{\tt e} = 4 \sin^{2}(\frac{\pi {\ell}}{2(n+1)})$, and the 
associated eigenvectors are given explicitly as follows
\begin{equation*}
	x_{\ell}^{\tt e} = y_{\ell}^{\tt e} = [\sin(\frac{{\ell}\pi}{n+1}),\sin(\frac{2{\ell}\pi}{n+1}),\cdots,\sin(\frac{n{\ell}\pi}{n+1})]^{\top},\qquad
\mbox{for }{\ell} = 1,\ldots,n.
\end{equation*}
In Table \ref{tab:rel_err_spd2},
we present the absolute and relative errors of the first ten smallest positive eigenvalues
and their eigenvectors for $n=1000$  with $n_{e} = n_b = 10$ and $\varepsilon_{\tt tol} = 10^{-10}$, 
from which 
one can observe an almost machine precision accuracy of our algorithm.
\begin{table}[!htbp]
	\centering
  \resizebox{\linewidth}{!}{
		\begin{tabular}{c|c|c|ccc}
			\toprule
		$k$ & $\lambda_k^{\tt e}$ & $\lambda_k$ & $|\lambda_k^{\tt e}-\lambda_k|$ & $\varepsilon_k$ & $\eta_k$ \\
		\hline\rule{0pt}{12pt}
		\,1 & 9.849886676638340E-06 & 9.849886676638337E-06 & 3.38E-21 & 3.43E-16 & 7.05E-16 \\
		~2 & 3.939944968628582E-05 & 3.939944968628914E-05 & 3.32E-18 & 8.42E-14 & 2.50E-16 \\
		~3 & 8.864839796909544E-05 & 8.864839796903917E-05 & 5.62E-17 & 6.34E-13 & 2.34E-15 \\
		~4 & 1.575962464285077E-04 & 1.575962464285145E-04 & 6.85E-18 & 4.35E-14 & 9.75E-16 \\
		~5 & 2.462423159360287E-04 & 2.462423159360293E-04 & 5.96E-19 & 2.42E-15 & 4.30E-16 \\
		~6 & 3.545857333379193E-04 & 3.545857333379432E-04 & 2.39E-17 & 6.74E-14 & 9.04E-16 \\
		~7 & 4.826254314637962E-04 & 4.826254314637798E-04 & 1.63E-17 & 3.39E-14 & 1.64E-17 \\
		~8 & 6.303601491371425E-04 & 6.303601491372393E-04 & 9.68E-17 & 1.53E-13 & 2.34E-16 \\
		~9 & 7.977884311877310E-04 & 7.977884311877433E-04 & 1.22E-17 & 1.53E-14 & 1.21E-15 \\
		10 & 9.849086284659575E-04 & 9.849086284659519E-04 & 5.63E-18 & 5.72E-15 & 5.56E-16 \\
		\bottomrule
	\end{tabular}
  }
	\caption{Accuracy for SPD matrix case in \textbf{Example \ref{exam:acc_spd}} 
	for $n=1000$ with $n_{e}=n_b=10$ and $\varepsilon_{\tt tol} = 10^{-10}$.}
	\label{tab:rel_err_spd2}
\end{table}

\begin{example}[Accuracy for SPSD matrix case]\label{exam:acc_sspd}
Here, we investigate a SPSD matrix $K$, where the generalized nullspace of $H$ 
in LREP \eqref{equ:xy_algebra} is nontrivial. Specifically, we choose the following matrices
$K = T(-1)$ and $M = T(0)$.
Matrix $K$ and $M$ are derived respectively by discretizing the Laplacian operator with 
periodic and homogeneous Dirichlet boundary condition on a uniform mesh grid in one-dimension space.
\end{example}

It is clear that 
we have
$Kx^0 ={\bf 0}$, $My^0 = x^0$,
and ${\rm rank}(K)=n-1$,
where
$x^0=[1, 1,\cdots,1]^{\top}$ and 
$y^0=[a_1, a_{2},\cdots,a_n]^{\top}$
with $a_{\ell} = a_{n-{\ell}+1} = {\ell}(n-{\ell}+1)/2$
for ${\ell} =1,\ldots,n/2$ when $n$ is even
and 
${\ell} =1,\ldots,(n+1)/2$ when $n$ is odd.
In general, we can get $x^0$ and $y^0$ numerically
by solving a symmetric eigenvalue problem 
and 
a symmetric linear equations problem 
using an appropriate algorithm \cite{Saad2003Iterative,Saad2011Numerical}.

The ``exact'' first ten smallest positive eigenvalues, served as benchmark solutions, are computed 
\comm{by function \textbf{eigs} in }Advanpix\footnote{https://www.advanpix.com/} toolbox using quadruple precision.
We carry out a detailed accuracy comparison with Matlab function \textbf{eigs}, and present 
the relative errors of eigenvalues in Table \ref{tab:rel_err_sspd}, 
from which we witness an accuracy superiority of our algorithm, even over the Matlab function.

\begin{table}[!htbp]
	\centering
  \resizebox{\linewidth}{!}{
	\begin{tabular}{c|c|c|c|c|c}
		\toprule
           $k$ &Advanpix   &   \textbf{eigs}  &  $\varepsilon_k$& our algorithm & $\varepsilon_k$\\
		\hline\rule{0pt}{12pt}
  \,1 & 3.943890108210E-05 & 3.943890108211E-05 & 3.85E-13  &3.943890108210E-05 &5.16E-15\\
	~2 & 6.154958719056E-05 & 6.154958719094E-05 & 6.33E-12  &6.154958719063E-05 &1.17E-12\\
	~3 & 1.577542931907E-04 & 1.577542931915E-04 & 5.01E-12  &1.577542931907E-04 &8.63E-14\\
	~4 & 1.994584196853E-04 & 1.994584196853E-04 & 4.51E-13  &1.994584196853E-04 &3.13E-13\\
	~5 & 3.549418750556E-04 & 3.549418750564E-04 & 2.50E-12  &3.549418750558E-04 &8.36E-13\\
	~6 & 4.161478616511E-04 & 4.161478616520E-04 & 2.31E-12  &4.161478616512E-04 &4.46E-13\\
	~7 & 6.309942290978E-04 & 6.309942290976E-04 & 1.92E-13  &6.309942290978E-04 &7.14E-14\\
	~8 & 7.116221744879E-04 & 7.116221744875E-04 & 5.99E-13  &7.116221744878E-04 &1.30E-13\\
	~9 & 9.859008227908E-04 & 9.859008227863E-04 & 4.53E-12  &9.859008227908E-04 &9.06E-15\\
	10 & 1.085870497647E-03 & 1.085870497651E-03 & 4.63E-12  &1.085870497646E-03 &5.26E-14\\
	\bottomrule
\end{tabular}
}
\caption{Accuracy for SPSD matrix case in \textbf{Example \ref{exam:acc_sspd}} for $n=1000$ with $n_{e}=n_b=10$ and $\varepsilon_{\tt tol} = 10^{-10}$.}
	\label{tab:rel_err_sspd}
	\end{table}

\subsection{Efficiency performance}
To show the efficiency performance, in this subsection, we consider two pairs of matrices of $K$ and $M$ that are 
generated by turboTDDFT code in QUANTUM ESPRESSO for disodium (Na2) and silane (SiH4) \cite{Giannozzi2009QUANTUM}.
The size of $K$ is $1862$ and $5660$ for Na2 and SiH4 respectively.
Such two molecules are often used as benchmarks to assess various simulation models, functionals, and methods
\cite{Bai2013Minimization,Bai2016Linear}.

\begin{example}[Small $n_{e}$ case]\label{exam:small_nev}
In this example, we investigate the convergence and efficiency when the number of eigenpairs, $n_{e}$, is small.
Here we compute the first ten smallest positive eigenvalues and their corresponding eigenvectors for Na2 and SiH4 problems
with the batch size $n_b=n_{e}$ for different tolerances, i.e., $\varepsilon_{\tt tol} = 10^{-6}$, $10^{-8}$, $10^{-10}$. 
\end{example} 

\revise{As shown in Table \ref{tab:iter_comparison}, compared with the widely used LOBP4dCG\footnote{\url{https://web.cs.ucdavis.edu/~bai/LReigsoftware/}},
	BOSP converges faster in general,
	typically requiring only about half number of iterations. 
	Notably, for the SiH4 system at the highest precision, 
	LOBP4dCG failed to converge within $200$ iterations for even the first eigenpair (indicated by ``--''), while BOSP successfully obtained all desired eigenpairs in only $17$ iterations.
}
\begin{table}[htbp]
	\centering
	\renewcommand{\arraystretch}{1.0}
	\begin{tabular}{lccc}
	\toprule
	Problem & \raisebox{0.5ex}{$\varepsilon_{\tt tol}$}  & LOBP4dCG & BOSP \\
	\midrule
	\multirow{3}{*}{Na2} 
	& $10^{-6}$  & 9  & 6  \\
	& $10^{-8}$  & 12 & 7  \\
	& $10^{-10}$ & 16 & 8  \\
	\midrule
	\multirow{3}{*}{SiH4} 
	& $10^{-6}$  & 20 & 10 \\
	& $10^{-8}$  & 25 & 13 \\
	& $10^{-10}$ & -- & 17 \\
	\bottomrule
	\end{tabular}
	\caption{Number of iterations required by LOBP4dCG and BOSP with $n_{e}=n_b=10$ 
	in \textbf{Example} \ref{exam:small_nev} for different tolerances.}
	\label{tab:iter_comparison}
	\end{table}

	Then, We present how the normalized residuals descend to target tolerances
during the iteration in Figure \ref{fig:res_Na2_SiH4}, from which we can observe a rapid and stable decrease in the normalized residuals for 
different precision requirements. Then, to further investigate the normalized residuals reduction process,
we apply a linear regression analysis for the first ten smallest positive eigenvalues and the associated eigenvectors.
In particular, we denote $r_{\ell}^{(j)}$ as the normalized residuals \eqref{equ:tol}
of $\ell$-th eigenpair $\{\lambda_{\ell}^{(j)}; \xi_{\ell}^{(j)}\}$ in the $j$-th iteration,
 and assume $r_{\ell}^{(j)}$ satisfy the following relation
\begin{equation}\label{equ:regression}
	r_{\ell}^{(j)} = \alpha_{\ell}\cdot (r_{\ell}^{(j-1)} )^{\beta_{\ell}}.
\end{equation}
We compute the regression coefficients $\alpha_{\ell}$ and $\beta_{\ell}$ for $\ell =1,2,\ldots,10$, and 
present the first ten smallest positive eigenvalues 
and the regression coefficients for Na2 and SiH4 problems in Table \ref{tab:rate_res_na2_sih4},
from which we observe a superlinear convergence for both problems.

\begin{figure}[htbp]
	\centering
	\subfigure[Na2 with $\varepsilon_{\tt tol} = 10^{-6}$]{
	\includegraphics[scale=0.23]{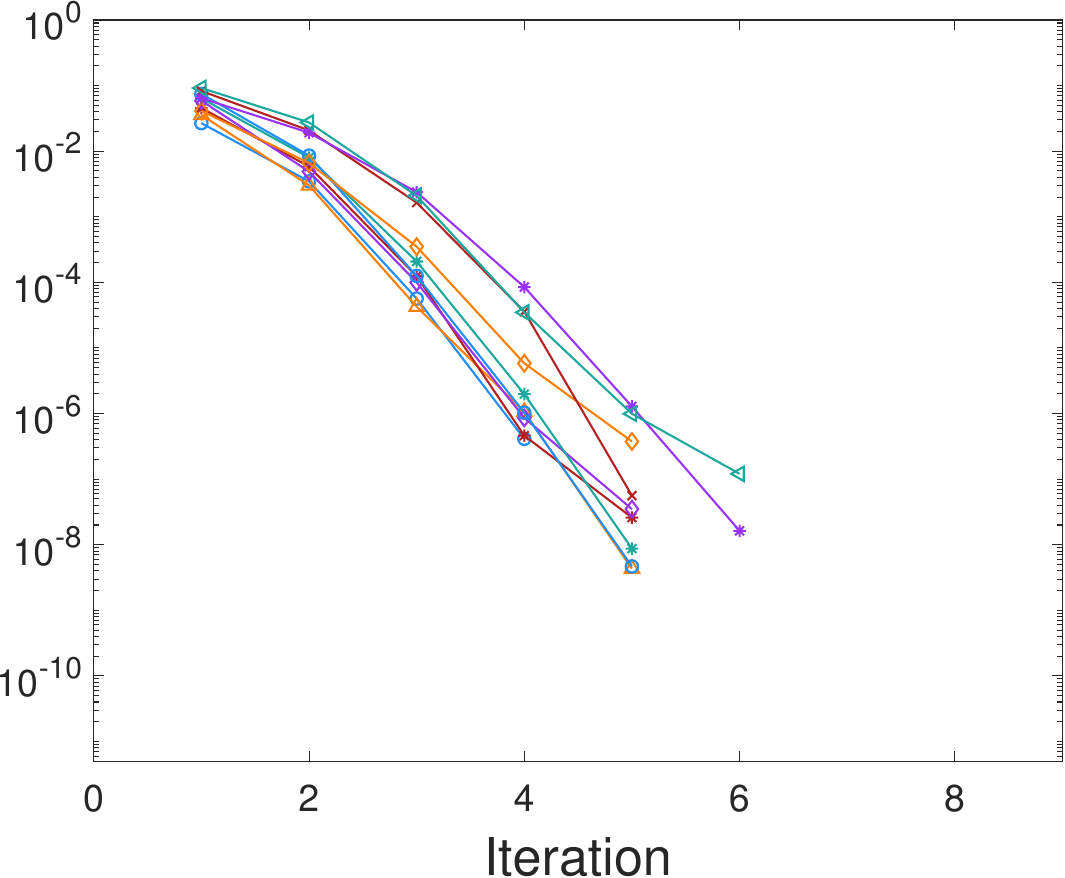}}
	\subfigure[Na2 with $\varepsilon_{\tt tol} = 10^{-8}$]{
	\includegraphics[scale=0.23]{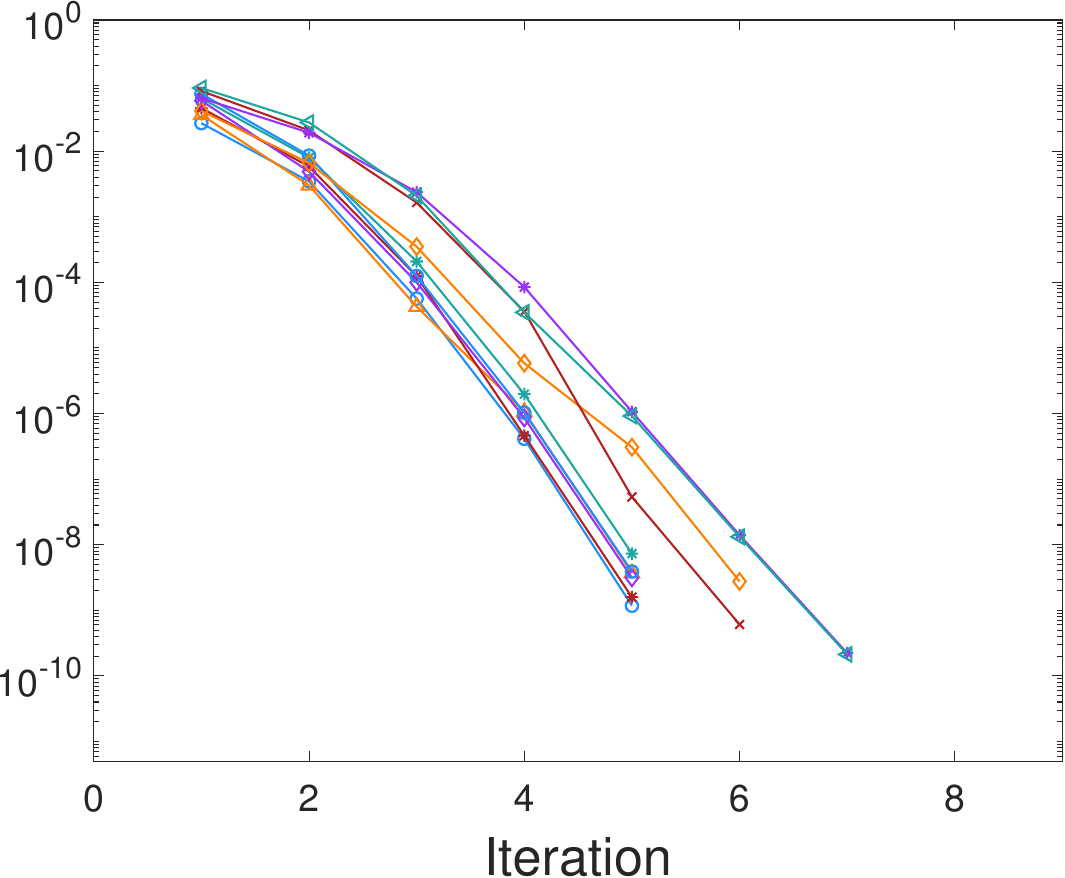}}
	\subfigure[Na2 with $\varepsilon_{\tt tol} = 10^{-10}$]{
	\includegraphics[scale=0.23]{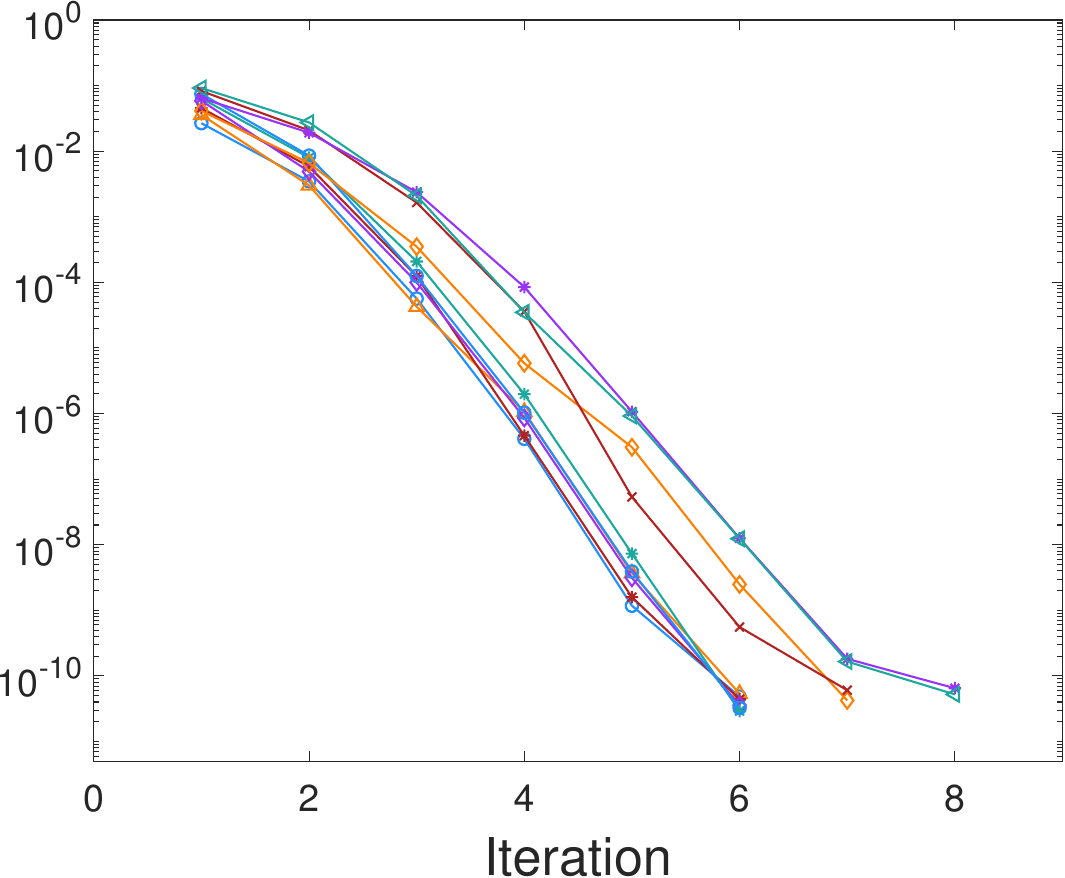}}\\
	\subfigure[SiH4 with $\varepsilon_{\tt tol} = 10^{-6}$]{
	\includegraphics[scale=0.23]{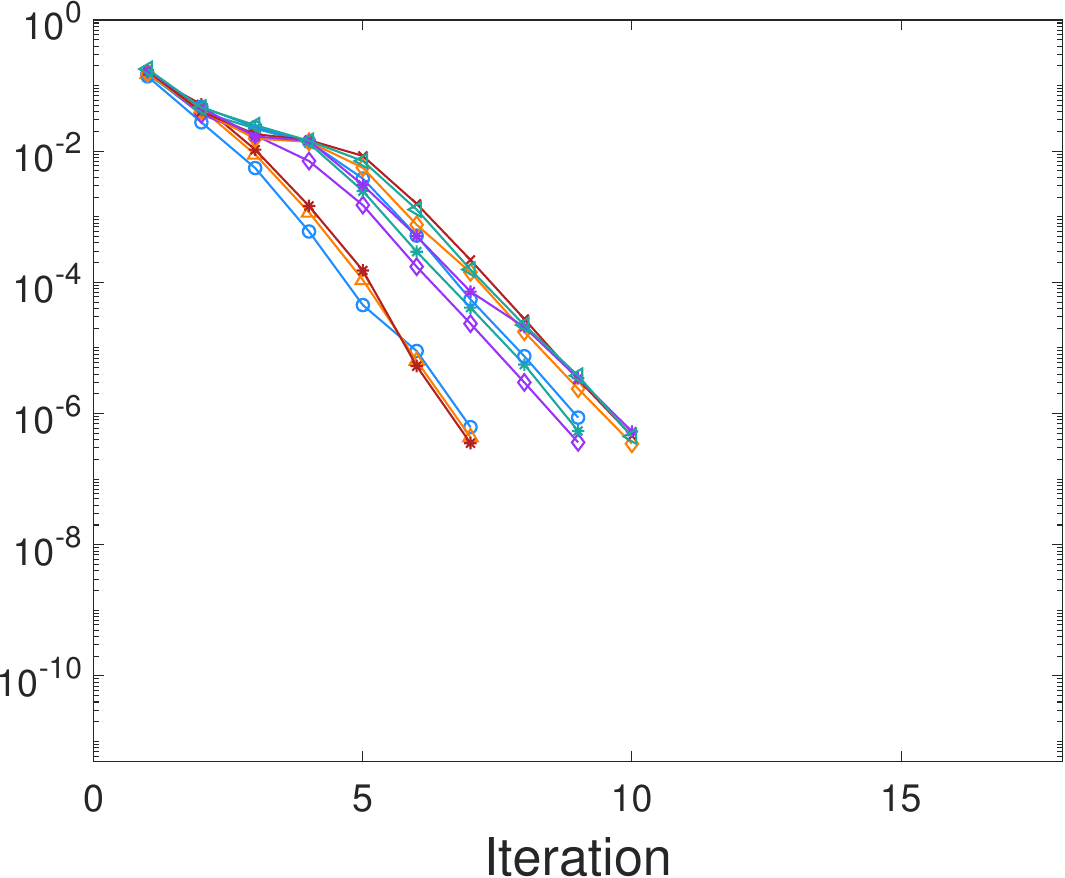}}
	\subfigure[SiH4 with $\varepsilon_{\tt tol} = 10^{-8}$]{
	\includegraphics[scale=0.23]{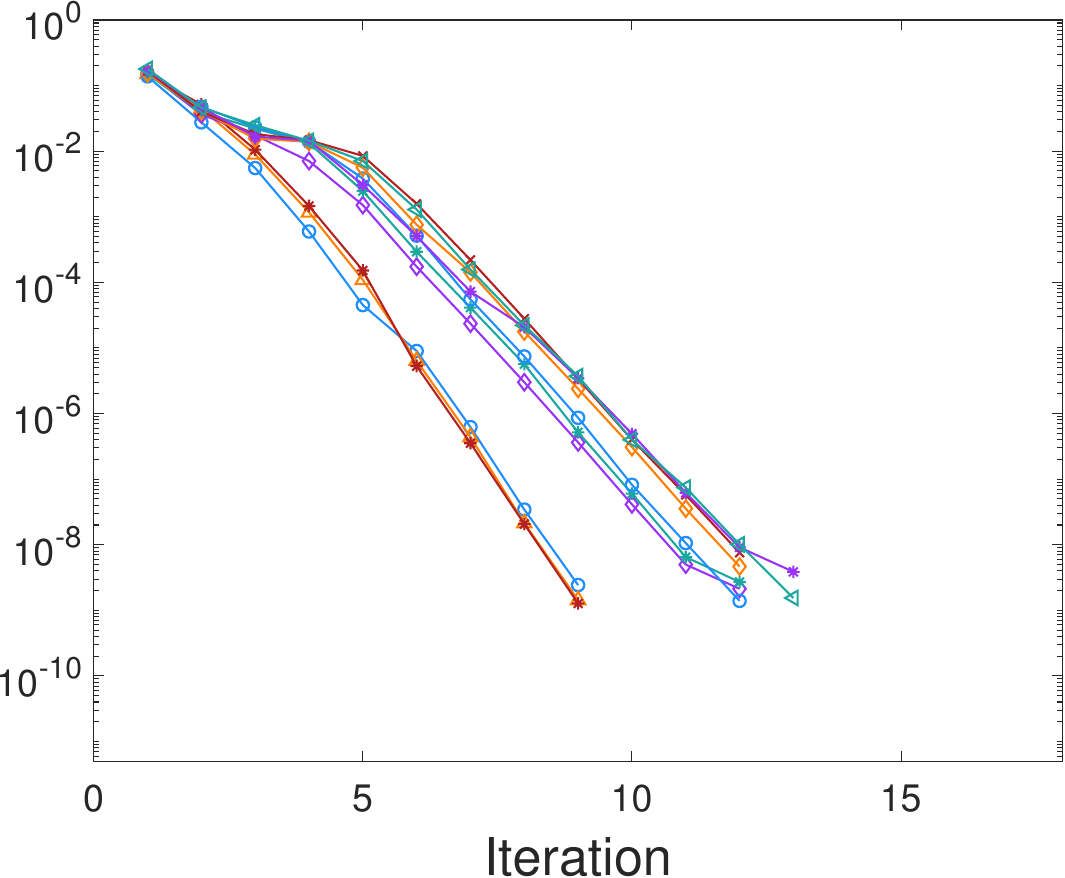}}
	\subfigure[SiH4 with $\varepsilon_{\tt tol} = 10^{-10}$]{
	\includegraphics[scale=0.23]{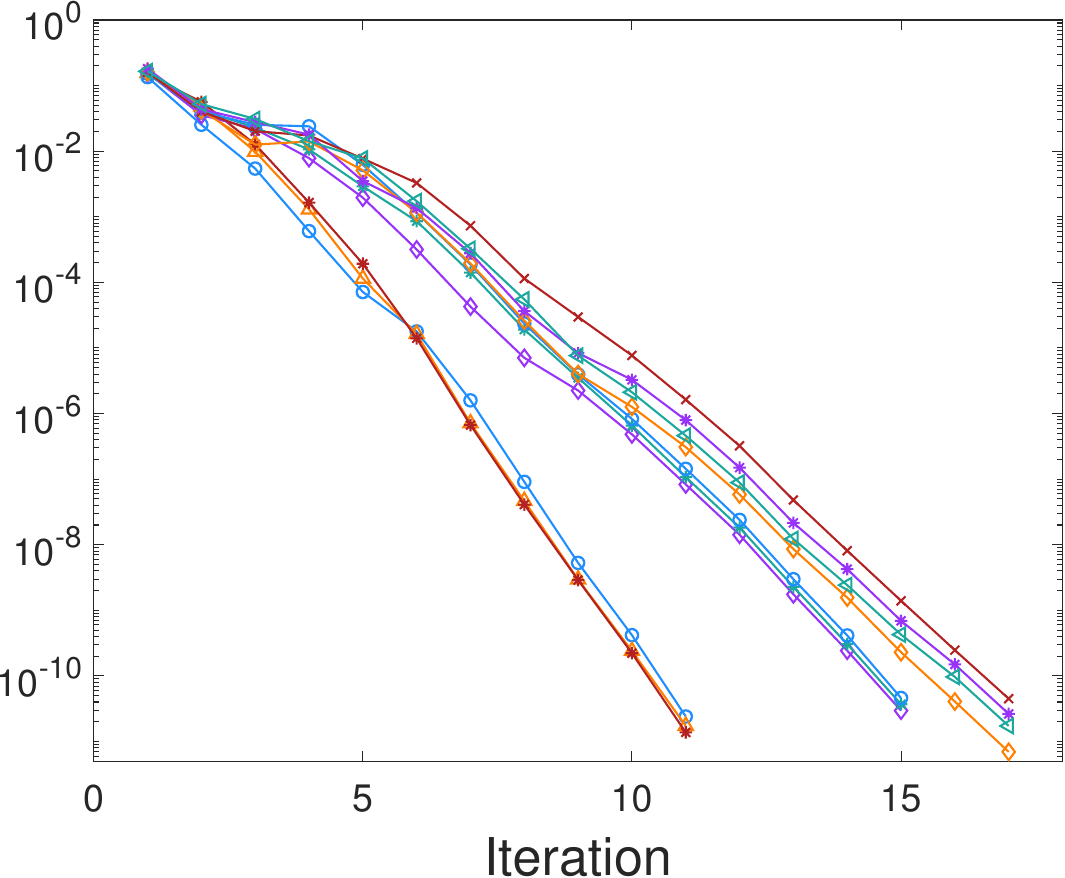}}
	\caption{Descending trend of the normalized residuals with $n_{e}=n_b=10$ in \textbf{Example} \ref{exam:small_nev}.}
	\label{fig:res_Na2_SiH4}
\end{figure}


\begin{table}[!htbp]
	\def\temptablewidth{1\textwidth}\tabcolsep 10pt
	\centering
	{\rule{\temptablewidth}{1pt}}
	\resizebox{\linewidth}{!}{
			\begin{tabular}{c|c|cc|c|cc}
		&  \multicolumn{3}{c|}{Na2} & \multicolumn{3}{c}{SiH4} \\
		\hline\rule{0pt}{10pt}
    ${\ell}$ & $\lambda_{\ell}$ & $\alpha_{\ell}$ & $\beta_{\ell}$ & $\lambda_{\ell}$ & $\alpha_{\ell}$ & $\beta_{\ell}$\\
		\hline\rule{0pt}{10pt}
		\,1& 0.227852000655427 & 0.0352 & 1.0625 & 0.541531082121720 & 0.2158 & 1.0653 \\
		~2& 0.240179032291449 & 0.0520 & 1.1057 & 0.541531084058815 & 0.1950 & 1.0603 \\
		~3& 0.240179035041609 & 0.0359 & 1.0790 & 0.541531086411073 & 0.2098 & 1.0688 \\
		~4& 0.242493850512922 & 0.0480 & 1.1185 & 0.619960061391761 & 0.3358 & 1.0461 \\
		~5& 0.245728485004234 & 0.0938 & 1.2091 & 0.619960062445083 & 0.3816 & 1.0583 \\
		~6& 0.246402942128366 & 0.0512 & 1.1354 & 0.619960064861915 & 0.3986 & 1.0626 \\
		~7& 0.301737363116933 & 0.1137 & 1.1214 & 0.635136237062253 & 0.3486 & 1.0374 \\
		~8& 0.301737364301465 & 0.0500 & 1.0501 & 0.635136238658330 & 0.4373 & 1.0520 \\
		~9& 0.329112160655168 & 0.0665 & 1.0228 & 0.639017048467930 & 0.3666 & 1.0371 \\
		10& 0.336776988704641 & 0.0557 & 1.0139 & 0.639017049742115 & 0.3667 & 1.0385 \\
\end{tabular}
}
	{\rule{\temptablewidth}{1pt}}
	\caption{The regression coefficients on the normalized residuals in \textbf{Equation} \eqref{equ:regression} 
	with $n_{e}=n_b=10$ and $\varepsilon_{\tt tol}=10^{-10}$.}
	\label{tab:rate_res_na2_sih4}
\end{table}

\begin{remark}[Newton search direction]
Numerical results shown in \textbf{Example} \ref{exam:small_nev}
witness a superlinear convergence, 
and it can be interpreted intuitively from the viewpoint of Newton search to find zeros of a vector-valued function.
In fact, it is easy to check that solution 
$\begin{bsmallmatrix} z_i \\ w_i \end{bsmallmatrix}$ 
to the linear system \eqref{equ:residual_ZW}
is close to the Newton search direction at point 
$\begin{bsmallmatrix} \tilde{y}_i \\ \tilde{x}_i \end{bsmallmatrix}$ 
for the following optimization problem
	\begin{equation*}
		\label{equ:uncon_trace}
		\min_{x,y\in\mathbb{R}^n}
		\frac{1}{2}
		(x^{\top}Kx + y^{\top}My)
		-\hat{\lambda}_i(x^{\top}y-1),
	\end{equation*}
which is usually used to solve the constrained problem
\begin{equation*} \label{equ:con_trace}
\min_{x^{\top}y=1} \frac{1}{2} (x^{\top}Kx + y^{\top}My) 
\end{equation*} by augmented Lagrangian method \cite{Hestenes1969Multiplier,Powell1969method}.
\end{remark}

\vspace{0.2cm}
\begin{example}[Large $n_{e}$ case]\label{exam:large_nev}
Here we investigate the efficiency improvement of moving mechanism when computing large number of eigenpairs, that is, 
$n_{e}  \gg 1$.  
In this case, we compute the first $5000$ smallest positive eigenvalues for SiH4 problem, whose size is $n= 5660$. 
The algorithm is implemented with $n_b =150$ and $\varepsilon_{\tt tol} = 10^{-6}$ if not stated otherwise.
\end{example} 

Firstly, we compare efficiency performance with/without the moving mechanism 
for different $n_{e}$, and present all the computation time 
in Figure \ref{fig:nev1-5000sih4}, 
from which we can observe an ascending efficiency improvement for increasing number of required eigenpairs. 
The computation time with the moving mechanism is only \revise{one-sixth} of that without the mechanism for $n_{e}=5000$ (nearly 90\% of all eigenvalues).
Meanwhile, we present the computational time for each component 
in Table \ref{tab:nev5000sih4},
from which one can conclude that the moving mechanism greatly improves efficiency, 
since the size of small-scale LREP \eqref{equ:small_eigen}, is much smaller.


\begin{figure}[!htbp]
\centering
\includegraphics[scale=0.5]{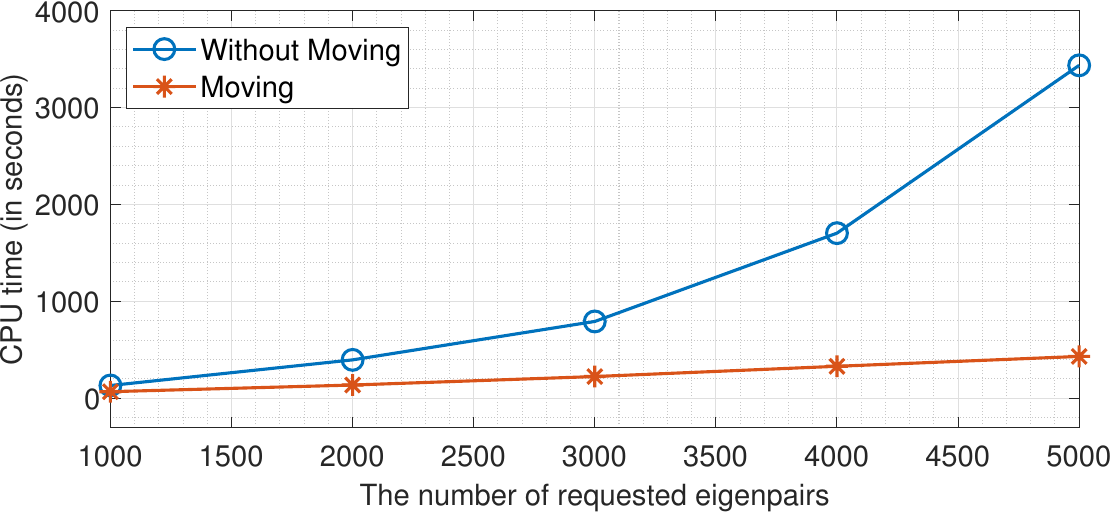}
\caption{The efficiency performance with/without the moving mechanism in \textbf{Example} \ref{exam:large_nev}.}
\label{fig:nev1-5000sih4}
\end{figure}

\begin{table}[!htbp]
	\centering
	\setlength{\tabcolsep}{3pt}{
\begin{tabular}{l|rr|rr}
\toprule
&  \multicolumn{2}{c|}{\mbox{Moving}} & \multicolumn{2}{c}{\mbox{Without moving}} \\
\midrule     
Procedure          & Time  &Percentage & Time & Percentage \\
  \midrule
Initialize $U$ and $V$     & 3.82     & 0.88\%   & 128.53    & 3.74\%        \\
Solve small-scale Problem  & 106.14   & 24.59\%  & 2673.51   & 77.80\%       \\
Check Convergence          & 5.37     & 1.23\%   & 6.07      & 0.18\%        \\
Compute $P$ and $Q$        & 11.47     &2.65\%   & 33.81     & 0.98\%        \\
Compute $W$ and $Z$        & 304.77   &70.61\%  & 594.48    & 17.30\%       \\         
\midrule                                                                  
Total time                 & 431.58   &100.00\%  & 3436.40  & 100.00\%       \\
\bottomrule
\end{tabular}
}
\caption{The computation times with/without the moving mechanism for $n_{e} = 5000$ in \textbf{Example} \ref{exam:large_nev}.}
\label{tab:nev5000sih4}
\end{table}

Secondly, to study the influence of batch size $n_b$ on efficiency, 
we compute the first $5000$ smallest positive eigenvalues and their associated eigenvectors.
In Figure \ref{fig:conpro_sih4_5000_nevConv}, 
we show the number of converged eigenvalues, ${\tt nevConv}$, 
versus the iteration number and CPU time for different batch sizes $n_b= 60,90,120,150,180$.
It seems that $n_b = 150$ is optimal to achieve the best efficiency in this case. 
Larger batch size leads to less iteration steps, but costs much more time in each small-scale LREP \eqref{equ:small_eigen}, 
and eventually results in a poor efficiency performance.
\begin{figure}[!htbp]
\centering
{
\includegraphics[scale=0.32]{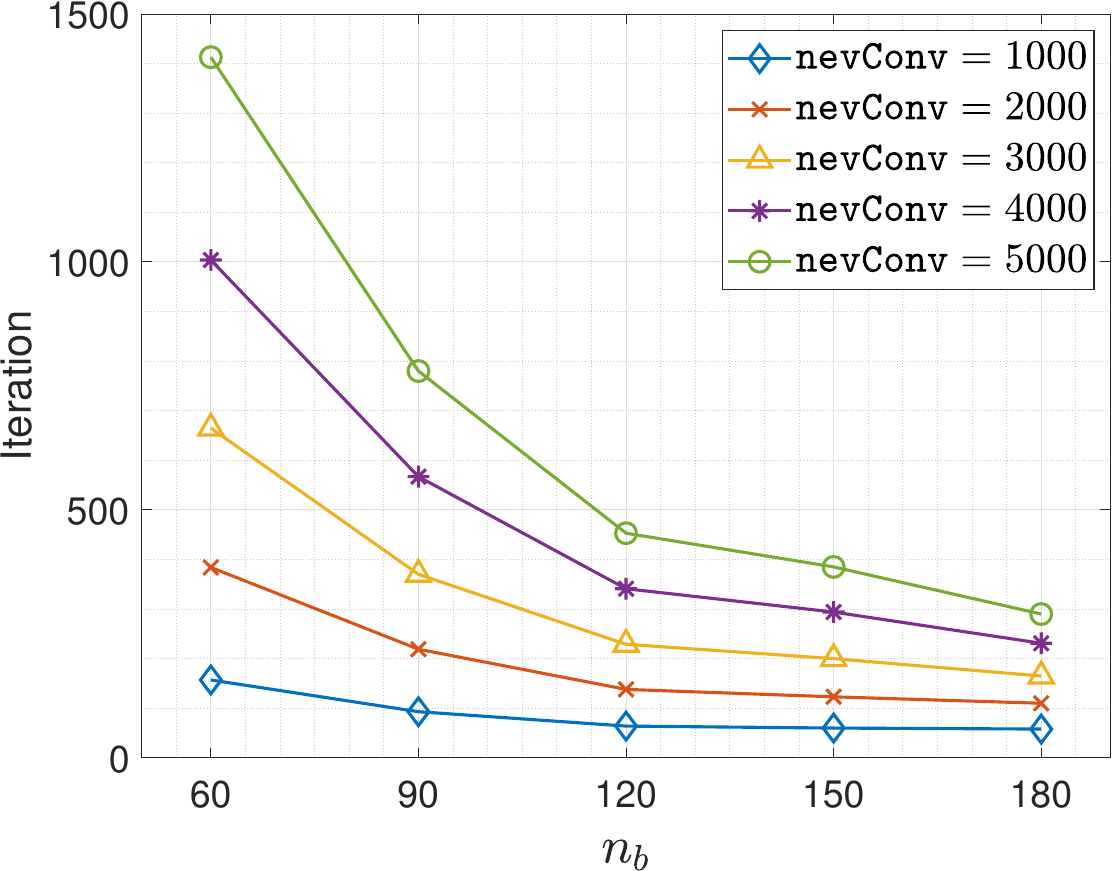}\qquad
\includegraphics[scale=0.32]{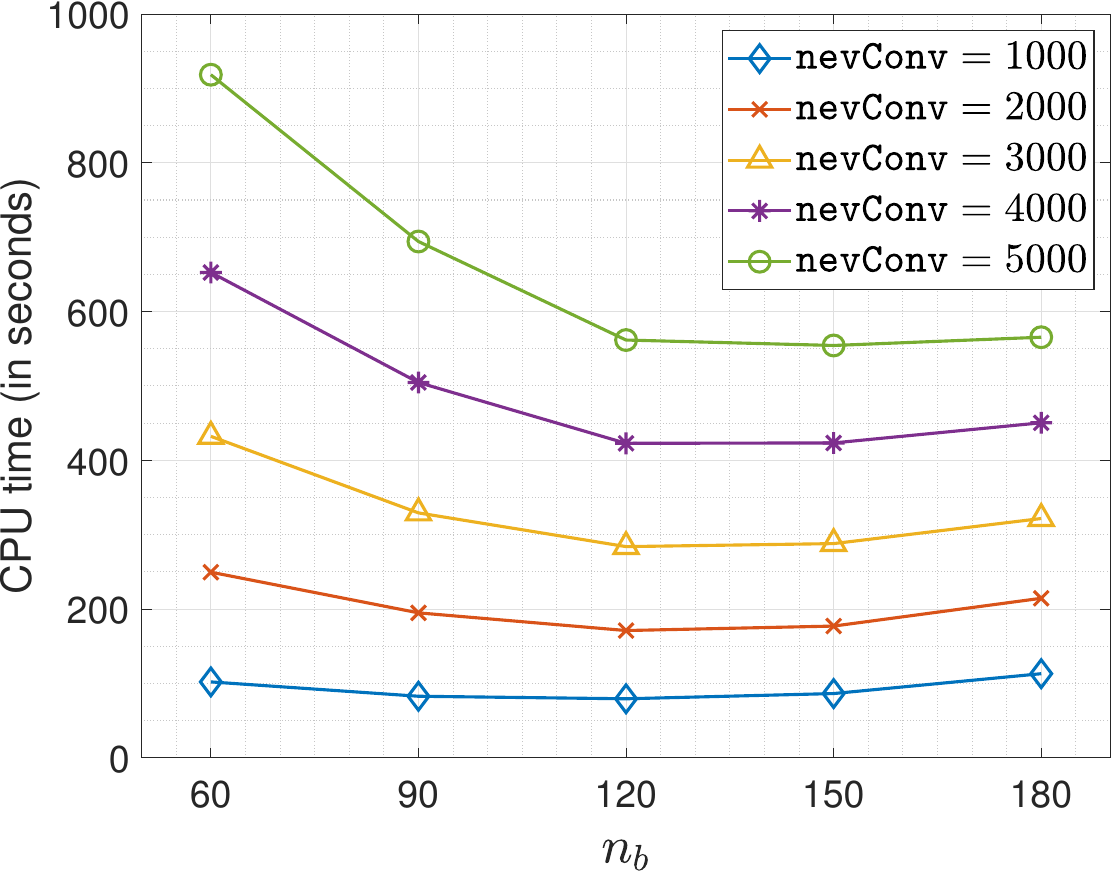}
}
\caption{
	The convergence performance when 
	the first {\tt nevConv} eigenpairs converged
	in \textbf{Example} \ref{exam:large_nev}.
}
\label{fig:conpro_sih4_5000_nevConv}
\end{figure}

\subsection{Parallel scalability for large-scale problem}\label{sec:parallel}

In this section, we investigate the parallel scalability for large-scale sparse and dense matrices.

\begin{example}[Large-scale sparse matrix]\label{exam:large_fem}
In this example, we solve a large-scale sparse LREP, that is generated from finite element discretization of the following eigenvalue problem:
To find 
$\left\{\lambda;u,v\right\}$, 
such that
\begin{eqnarray}\label{equ:Laplace_Eigenvalue_Problem}
\left\{
\begin{array}{ll}
	\begin{bmatrix}
	\revise{O}&-\Delta \\ \mathcal{I} & \revise{O}
	\end{bmatrix}
	\begin{bmatrix}
		v \\ u
	\end{bmatrix}
	= \lambda
	\begin{bmatrix}
		v \\ u
             \end{bmatrix}, &\mbox{ in}\ \Omega,\\[1em]
v=u=0, &\mbox{ on}\ \partial\Omega,
\end{array}
\right.
\end{eqnarray}
where $\Omega=(0,1)^3$.
Here,
$\Delta$ and $\mathcal{I}$
are the Laplacian operator and the identity operator respectively.

The discretization of equation \eqref{equ:Laplace_Eigenvalue_Problem} by cubic finite element (P3 element) using PHG\footnote{http://lsec.cc.ac.cn/phg/index\_en.htm}
with $393, 216$ elements results in the following generalized eigenvalue problem
\begin{equation}\label{equ:gLREP}
\begin{bmatrix}	O & K \\ M & O \end{bmatrix}
\begin{bmatrix} y \\ x \end{bmatrix} 
=\lambda 
\begin{bmatrix} B & O \\ O & B \end{bmatrix}
\begin{bmatrix} y \\ x \end{bmatrix},
\end{equation}
where the stiffness and mass matrix $K,M$ are of the same size, i.e., $n  = 1,742,111$ (about 1.7 million).

\end{example}

The numerical experiments were carried out on 
LSSC-IV\footnote{http://lsec.cc.ac.cn/chinese/lsec/LSSC-IVintroduction.pdf} 
in the State Key Laboratory of Scientific and Engineering Computing, Chinese Academy of Sciences. 
Each computing node has two 18-core Intel Xeon Gold 6140 processors at 2.3 GHz and 192 GB memory.
The algorithm is implemented with MPI parallelism in order to measure its scaling performance, using $36$, $72$, $144$, $288$, $576$ processes. 
Table \ref{tab:nev50_linres} shows the computation time of each component step for different number of processes, from which 
one can easily observe a nearly linear scaling with number of processes.

\begin{table}[!htbp]
\centering
\resizebox{\linewidth}{!}{
\begin{tabular}{l|rr|rr|rr|rr|rr}
\toprule
{Number of processes}          & {36}&                      & {72} &                   & {144} &                & {288} &                & {576} & \\
 \midrule
Procedure 
			&Time    & Percentage & Time    & Percentage 
			&Time    & Percentage & Time    & Percentage
			&Time    & Percentage \\
		  \midrule
Initialize $U$ and $V$            & 18.58     &  8.88\%  &  9.74   &  8.61\%  & 4.95 &  8.44\%   & 2.60&  8.52\%  & 1.62&   9.29\%  \\
Solve small-scale Problem & 18.46   &  8.83\%  &  9.90    &  8.75\%   & 5.41 &  9.23\%   & 2.86&  9.38\%  & 1.87&   10.72\%  \\
Check Convergence& 3.77      &  1.80\%  &  2.09   &  1.85\%   & 1.29 &  2.20\%   & 0.67&  2.20\%   & 0.41&   2.35\%  \\
Calculate $P$ and $Q$          & 0.53      &  0.25\%  &  0.30   &  0.47\%   & 0.16 &   0.48\%   & 0.07&  0.23\%  & 0.04&   0.23\%  \\
Calculate $W$ and $Z$       &167.80   & 80.23\% &91.15   & 80.54\%    & 46.84&  79.88\% &24.30& 79.67\% &13.48&  77.29\%  \\
 \midrule
Total Time                 & 209.15 &100.00\% &113.18 &100.00\% &58.64 &100.00\% &30.50 &100.00\% &17.44 & 100.00\%  \\
                        \bottomrule
\end{tabular}
}
\caption{The parallel scalability in \textbf{Example} \ref{exam:large_fem} with $n_{e}=50$ and $n_b=10$.}
	\label{tab:nev50_linres}
\end{table}

\begin{example}[Large-scale dense matrix]\label{exam:large_dense}
   In this example, we solve BdG \cite{Gao2020Numerical,Tang2022spectrally}, 
a special LREP that arises from 
the perturbation of the Bose-Einstein Condensates around stationary states. 
The differential operator is spatially discretized by the Fourier spectral method and 
the corresponding discrete system is of \textbf{large-scale}, especially for two and three spatial dimensions, 
and \textbf{dense}, i.e., fully populated for each matrix entry. Therefore, we design a \textbf{matrix-free} interface, 
which requires the matrix-vector product during the iterative process, 
to reduce the memory requirement substantially.
\end{example}

Here, we consider the BdG equation in $3$-dimensional space which is given as follows
\cite{Bao2013Mathematical,Gao2020Numerical,Lucero2008Molecular,Tang2022spectrally}:
to find 
$\left\{\lambda;u,v\right\}$ 
such that
\begin{equation} \label{equ:bdg}
	\begin{cases}
		\mathcal{L}_{\mbox{\tiny GP}}u + \beta|\phi_{\rm s}|^2 u
		+\beta\phi_{\rm s}^2 v
        = \lambda ~u,\\[0.4em]
				\mathcal{L}_{\mbox{\tiny GP}}v + \beta|\phi_{\rm s}|^2 v
				+\beta\overline{\phi}_{\rm s}^2 u
	= -\lambda~ v,
	\end{cases}
\end{equation}
with constrain 
$\int_{\mathbb{R}^3} \left(|u({\bf x})|^2 -|v({\bf x})|^2\right) \, {\rm d}{\bf x} = 1$,
where
$\mathcal{L}_{\mbox{\tiny GP}} =
	-\frac{1}{2}\nabla^2 + V({\bf x}) 
	+ \beta|\phi_{\rm s}|^2 
	- \mu_{\rm s}$
and $V({\bf x})=\frac{1}{2} \left[ (\gamma_xx)^2+(\gamma_yy)^2+(\gamma_zz)^2\right]$ is the harmonic external
potential with $\gamma_x$, $\gamma_y$, $\gamma_z$ being the trapping frequencies in each spatial direction.
The chemical potential $\mu_{\rm s}$ and the stationary state $\phi_{\rm s}({\bf x})$
satisfy the following nonlinear Gross-Pitaevskii equation (GPE) \cite{Bao2013Mathematical} 
\begin{equation} \label{equ:gpe}
	\Big[
		-\frac{1}{2}\nabla^2 + V({\bf x})
		+ \beta|\phi_{\rm s}|^2 
	\Big]\phi_{\rm s} 
	= \mu_{\rm s}\phi_{\rm s},\qquad
	\mbox{  with } 
\int_{\mathbb {R}^3} 
|\phi_{\rm s}({\bf x})|^2\, {\rm d}{\bf x} = 1.
\end{equation}
Using a change of variables ${f} = u+v$ and ${g} = u-v$, 
we can rewrite the BdG equation \eqref{equ:bdg} as follows
\begin{equation}\label{equ:bdg_KM}
\mathcal{L}_{\mbox{\tiny GP}}{g} = \lambda ~f,\qquad 
(\mathcal{L}_{\mbox{\tiny GP}} + 2\beta|\phi_{\rm s}|^2) {f}
= \lambda~ g,\qquad
	\mbox{with }
	\int_{\mathbb {R}^3} f({\bf x})g({\bf x}) \, {\rm d}{\bf x} = 1.
\end{equation}
The nullspace of $\mathcal{L}_{\mbox{\tiny GP}}$ is not empty since $\mathcal{L}_{\mbox{\tiny GP}}\phi_{\rm s}=0$, 
and it immediately implies that $0$ is an eigenvalue of \eqref{equ:bdg_KM}.
As stated in \cite{Gao2020Numerical,Tang2022spectrally}, there exist three analytical eigenpairs as follows
\begin{equation*}
	\left\{\gamma_{\alpha};
		{\sqrt{2}}\gamma_{\alpha}^{-\frac{1}{2}}\partial_{\alpha}\phi_{\rm s},
		-{\sqrt{2}} \gamma_{\alpha}^{\frac{1}{2}}\alpha\phi_{\rm s}
	\right\},\ \mbox{ for } \alpha = x,y,z,
\end{equation*}
and they will serve as benchmark solutions.

Since the eigenfunctions are smooth and fast-decaying, 
we discretize the BdG equation \eqref{equ:bdg_KM} using the Fourier spectral method
on a uniformly discretized rectangular domain $\Omega$.
As a result, we are confronted with a large-scale densely populated eigenvalue problem.
The operator-function/matrix-vector product implementation is of essential importance to efficiency, and is realized with the Fast Fourier Transform with almost optimal complexity 
$\mathcal O(n\log n)$ with $n$ being the number of grid points.

In this case, we set $\gamma_x=\gamma_y=1$, $\gamma_z=2$, $\beta = 100$ and 
compute the first eight smallest positive eigenvalues of \eqref{equ:bdg_KM} with $\varepsilon_{\tt tol} = 10^{-8}$ for different mesh sizes 
on domain $\Omega = (-8,8)^3$. The numerical eigenvalues and computation times are presented in Table \ref{tab:bdg_eval}, from which 
we can observe that the computational time is roughly $\mathcal{O}(n\log n)$ and 
the number of matrix-vector product seems unchanged and independent of the spatial mesh size $h$.
It implies that the performance remains efficient and scalable for large-scale matrix.
 \begin{table}[!htbp]
	\centering
\resizebox{\linewidth}{!}{
		\begin{tabular}{c|ccccc}
			\toprule
$h$ &	$h_0 =1$ & $h_0/2$ & $h_0/4$ & $h_0/8$ & $h_0/16$ \\
		\hline\rule{0pt}{10pt}
$n$	&	 4,096  &    32,768  &   262,144  &  2,097,152 &  16,777,216 \\
		\hline\rule{0pt}{10pt}
$\lambda_1$  &1.00482127487320 & 0.99999662331167 & 1.00000000000871 & 0.99999999999422 & 1.00000000000157 \\
$\lambda_2$  &1.00482127490807 & 0.99999662331498 & 1.00000000000872 & 0.99999999999422 & 1.00000000000215 \\
$\lambda_3$  &1.51719126293581 & 1.51548541163141 & 1.51549662106678 & 1.51549662107412 & 1.51549662106231 \\
$\lambda_4$  &1.54237926065405 & 1.51549063915454 & 1.51549662106947 & 1.51549662107433 & 1.51549662106233 \\
$\lambda_5$  &1.90225282934626 & 1.87508846000935 & 1.87496258699030 & 1.87496258650835 & 1.87496258643911 \\
$\lambda_6$  &1.91639375724723 & 2.00026954487640 & 2.00000000104929 & 2.00000000005089 & 2.00000000000583 \\
$\lambda_7$  &2.06343832504500 & 2.04187426614331 & 2.04187975275431 & 2.04187975276325 & 2.04187975274606 \\
$\lambda_8$  &2.06343832505536 & 2.04187426614365 & 2.04187975275628 & 2.04187975276327 & 2.04187975274609 \\
		\hline\rule{0pt}{10pt}
		 CPU(s) & 0.66  & 4.39 &  37.34 & 397.02 & 3706.27 \\
		 \bottomrule
	\end{tabular}
}
        \caption{The first eight smallest positive eigenvalues and CPU time
           in \textbf{Example} \ref{exam:large_dense}.}
	\label{tab:bdg_eval}
\end{table}

To further investigate the performance, in Figure \ref{fig:bdg_err}, 
we present the normalized residuals of $\gamma_x$ and $\gamma_z$ (top left and right) 
and eigenvalue errors of $\gamma_x$ and $\gamma_z$ (bottom left and right) 
versus 
iteration steps for different mesh sizes $h$. 
It is noticed that the convergence seems to be independent 
of the mesh size, while the bottom row indicates a similar spectral convergence as its analytical counterparts.  
A more comprehensive study of its performance, in terms of accuracy, efficiency, and parallel capability, in the context 
of BdG equation is going on, and shall be reported in another article.
\begin{figure}[!htbp]
    \centering
 		\subfigure[The normalized residuals of $\gamma_x$ and $\gamma_z$.]
		{
     \includegraphics[scale=0.33]{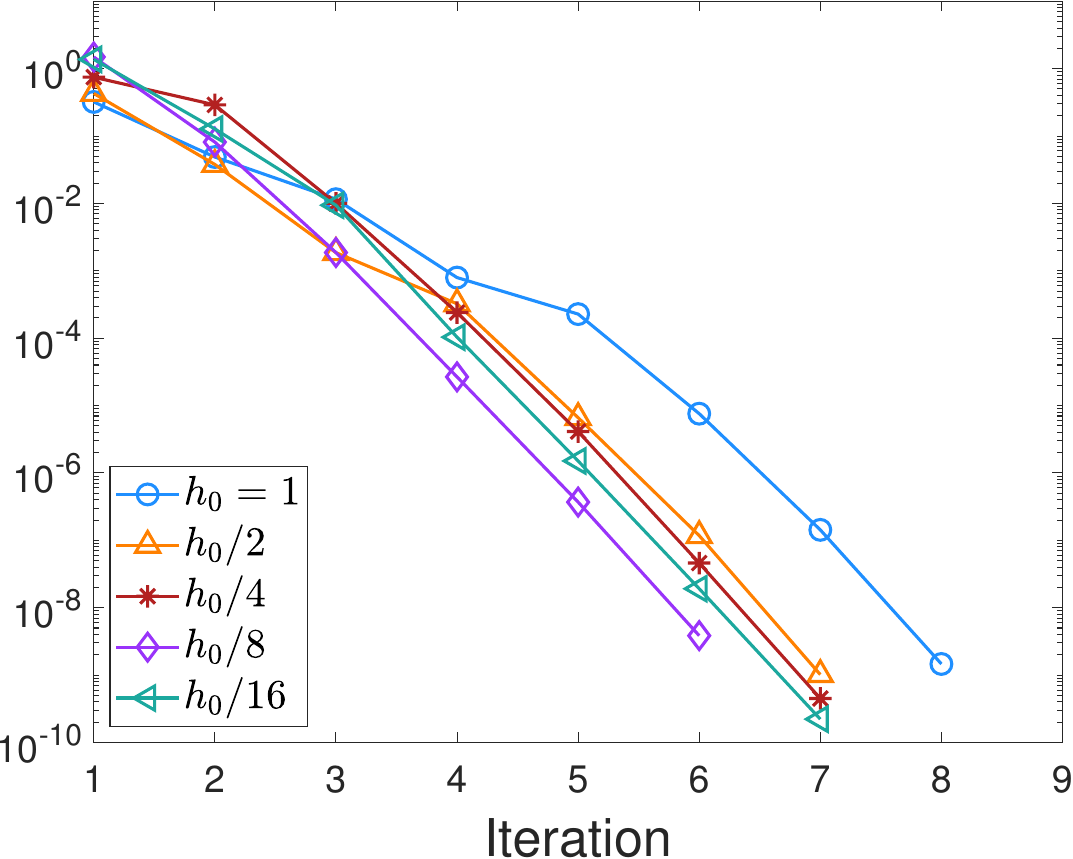}\qquad
     \includegraphics[scale=0.33]{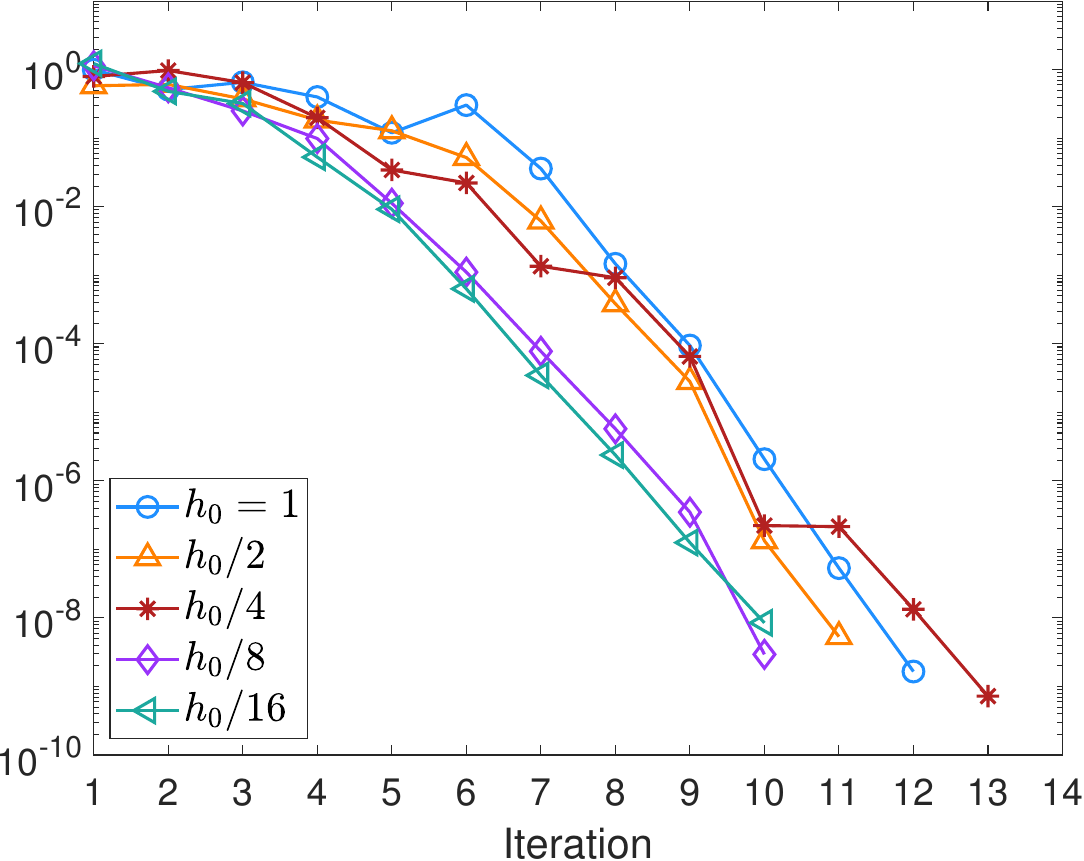}
		}
 		\subfigure[The numerical errors of $\gamma_x$ and $\gamma_z$.]
 		{
 		\includegraphics[scale=0.33]{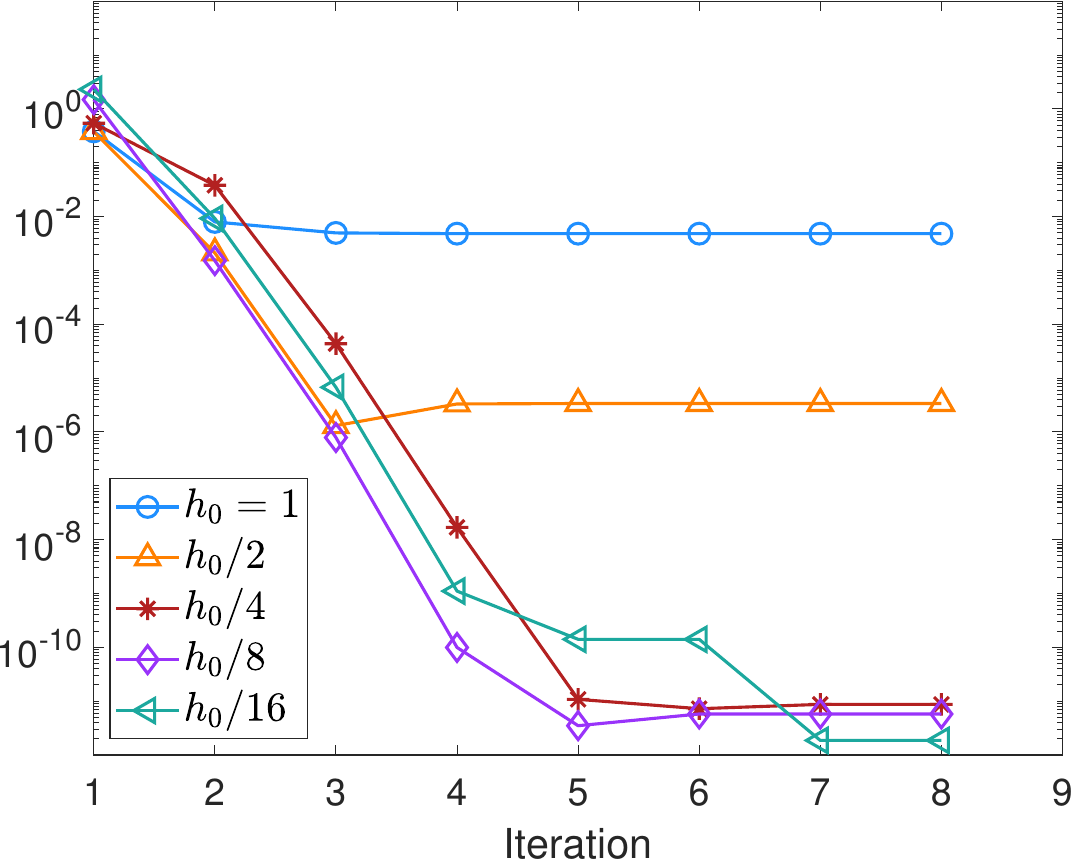}\qquad
 		\includegraphics[scale=0.33]{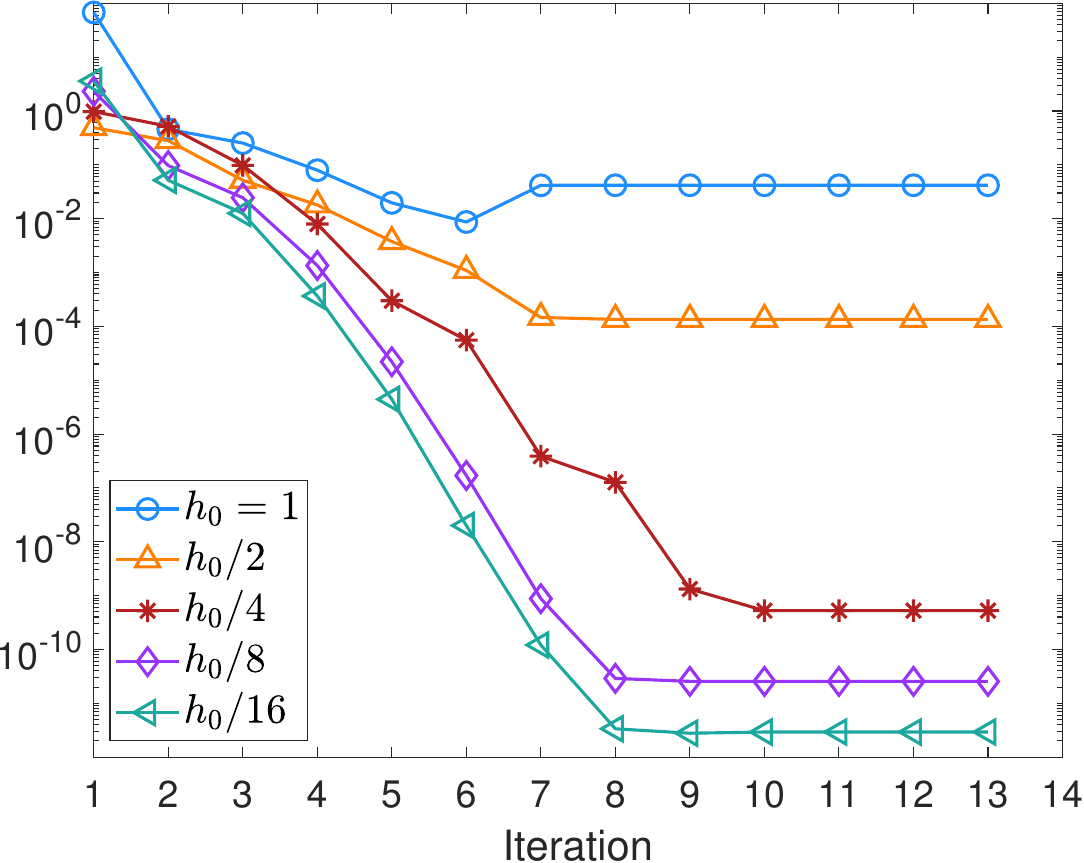}
 		}
     \caption{The convergence performance 
			 in \textbf{Example} \ref{exam:large_dense} for different mesh sizes.
		 }
   \label{fig:bdg_err}
\end{figure}
\section{Conclusions}

For linear response eigenvalue problems,
we proposed a Bi-Orthogonal Structure-Preserving subspace iterative solver
to compute the corresponding large-scale sparse and dense discrete eigenvalue system that may admit zero eigenvalues.
Our solver is based on a direct sum decomposition of biorthogonal invariant subspaces where the generalized nullspace is taken into account.
Preserving the biorthogonal property is of essential importance to guarantee the convergence, efficiency, and parallel scalability.
The MGS-Biorth algorithm 
preserves the biorthogonality quite well on the discrete level.
We introduce a deflation mechanism via biorthogonalization without introducing any artificial parameters so that 
the converged eigenpairs are deflated out automatically. 
When the number of requested eigenpairs is very large, we propose a moving mechanism to advance the computation batch by batch such that
the memory requirements are greatly reduced and efficiency is much improved to a great extent.
For large-scale problems, the matrix-vector product is allowed to be provided interactively, waiving explicit memory storage.
The performance is further improved when the matrix-vector product is implemented using parallel computing.
Extensive numerical results are shown to demonstrate the stability, efficiency, and parallel scalability.

The linear response eigenvalue problem in this article is equivalent to the 
real Bethe-Salpeter eigenvalue problem. In contrast, complex Bethe-Salpeter eigenvalue problem, which is quite common in many physical applications,  
is much more challenging and now under investigation, 
and we shall report our results in future work.



\begin{thebibliography}{38}
      \normalem
\providecommand{\natexlab}[1]{#1}
\providecommand{\url}[1]{\texttt{#1}}
\expandafter\ifx\csname urlstyle\endcsname\relax
\providecommand{\doi}[1]{doi: #1}\else
\providecommand{\doi}{doi: \begingroup \urlstyle{rm}\Url}\fi


\bibitem{Bai2012Minimization}
{\sc Z. Bai and R.C. Li.}
{\em Minimization principles for the linear response eigenvalue problem I: Theory}, 
SIAM J. Matrix Anal. Appl., 33(2012), pp.~1075--1100.

\bibitem{Bai2013Minimization}
{\sc Z. Bai and R.C. Li},
{\em Minimization principles for linear response eigenvalue problem II: Computation}, 
SIAM J. Matrix Anal. Appl., 34(2013), pp.~392--416.

\bibitem{Bai2014Minimization}
{\sc Z. Bai and R.C. Li},
{\em Minimization principles and computation for the generalized linear response eigenvalue problem}, 
BIT Numer. Math., 54(2014), pp.~31--54.

\bibitem{Bai2016Linear}
{\sc Z. Bai, R.C. Li and W.W. Lin},
{\em Linear response eigenvalue problem solved by extended locally optimal preconditioned conjugate gradient methods}, 
Sci. China Math., 59(2016), pp.~1443--1460.

\bibitem{Bao2013Mathematical}
{\sc W. Bao and Y. Cai},
{\em Mathematical theory and numerical methods for Bose-Einstein condensation},
Kinet. Relat. Mod., 6(2013), pp.~1--135.


\bibitem{Benner2022efficientbse}
{\sc P. Benner and C. Penke},
{\em Efficient and accurate algorithms for solving the Bethe-Salpeter eigenvalue problem for crystalline systems},
J. Comput. Appl. Math., 400(2022), pp.~0377--0427.

\bibitem{Blase2020Bethe}
{\sc X. Blase, I. Duchemin, D. Jacquemin and P. F. Loos}, 
{\em The Bethe-Salpeter equation formalism: from physics to chemistry}, 
J. Phys. Chem. Lett., 11(2020), pp.~7371--7382.

\bibitem{Brabec2015Efficient}
{\sc J. Brabec, L. Lin, M. Shao, N. Govind, C. Yang, Y. Saad and E. G. Ng}, 
{\em Efficient algorithms for estimating the absorption spectrum within
linear response TDDFT}, 
J. Chem. Theory Comput., 11(2015), pp.~5197--5208.

\bibitem{Casida1995Time}
{\sc M. E. Casida}, 
{\em Time-dependent density functional response theory for molecules in: Recent Advances In Density Functional Methods: (Part I)}, 
World Scientific, 1995, pp.~155--192.

\bibitem{Gao2020Numerical}
{\sc Y. Gao and Y. Cai},
{\em Numerical methods for Bogoliubov-de Gennes excitations of Bose-Einstein condensates},
J. Comput. Phys., 403(2020), article 109058.

\bibitem{Giannozzi2009QUANTUM}
{\sc P. Giannozzi, S. Baroni, N. Bonini, M. Calandra, R. Car, C. Cavazzoni,
D. Ceresoli, G. L. Chiarotti, M. Cococcioni, I. Dabo et al}, 
{\em QUANTUM ESPRESSO: a modular and open-source software project for
quantum simulations of materials}, 
J. Phys. Condens. Matter., 21(2009), article 395502.

\bibitem{Hernandez2005SLEPc}
{\sc V. Hernandez, J. E. Roman, and V. Vidal},
{\em SLEPc: A scalable and flexible toolkit for the solution of eigenvalue problems},
ACM Trans. Math. Softw., 31(2005), pp.~351--362.


\bibitem{Hestenes1969Multiplier}
{\sc M. R. Hestenes}, 
{\em Multiplier and gradient methods}, 
J. Optimiz. Theory App., 5(1969), pp.~303--320.


\bibitem{Higham1995test}
{\sc N. J. Higham}, 
{\em The test matrix toolbox for matlab (version 3.0)}, 
Technical report, University of Manchester, 1995.

\bibitem{Kohaupt2014Introduction}
{\sc L. Kohaupt}, 
{\em Introduction to a Gram-Schmidt-type biorthogonalization method}, 
Rocky Mt. J. Math., 44(2014), pp.~1265--1279.

\bibitem{Kress2012Numerical}
{\sc R. Kress}, 
{\em Numerical Analysis}, 
Springer Science \& Business Media, 2012.

\bibitem{Lanczos1950iteration}
{\sc C. Lanczos}, 
{\em An iteration method for the solution of the eigenvalue problem of linear differential and integral operators}, 
J. Res. Nat. Bur. Stand., 45(1950), pp.~255--282.

\bibitem{Li2023GCGE}
{\sc Y. Li, Z. Wang and H. Xie}, 
{\em GCGE: a package for solving large scale eigenvalue problems by
parallel block damping inverse power method}, 
CCF Trans. High Perform. Comput., 5(2023), pp.~171--190.

\bibitem{Li2020parallel}
{\sc Y. Li, H. Xie, R. Xu, C. You and N. Zhang}, 
{\em A parallel generalized conjugate gradient method for large scale
eigenvalue problems}, 
CCF Trans. High Perform. Comput., 2(2020), pp.~111--122.

\bibitem{Lucero2008Molecular}
{\sc M. J. Lucero, A. M. N. Niklasson, S. Tretiak and M. Challacombe}, 
{\em Molecular-orbital-free algorithm for excited states in time-dependent
perturbation theory}, 
J. Chem. Phys., 129(2008), article 064114.



\bibitem{Martin2021Stability}
{\sc A. D. Martin and P.B. Blakie},
{\em Stability and structure of an anisotropically trapped dipolar Bose-Einstein condensate: Angular and linear rotons},
Phys. Rev. A, 86(2012), article 053623.

\bibitem{Onida2002Electronic}
{\sc G. Onida, L. Reining and A. Rubio}, 
{\em Electronic excitations: density-functional versus many-body
Green's-function approaches}, 
Rev. Mod. Phys., 74(2002), pp.~601--659.

\bibitem{Papakonstantinou2007Reduction}
{\sc P. Papakonstantinou}, 
{\em Reduction of the RPA eigenvalue problem and a generalized
Cholesky decomposition for real-symmetric matrices}, 
Europhys. Lett., 78(2007), article 12001.

\bibitem{Parlett1985look}
{\sc B. N. Parlett, D. R. Taylor and Z. A. Liu}, 
{\em A look-ahead Lanczos algorithm for unsymmetric matrices}, 
Math. Comput., 44(1985), pp.~105--124.

\bibitem{Powell1969method}
{\sc M. J. Powell}, 
{\em A method for nonlinear constraints in minimization problems},
Optimization, Academic Press, 1969, pp.~283--298.

\bibitem{Ring2004Nuclear}
{\sc P. Ring and P. Schuck}, 
{\em The Nuclear Many-body Problem}, 
Springer Science \& Business Media, 2004.

\bibitem{Rocca2012block}
{\sc D. Rocca, Z. Bai, R. C. Li and G. Galli}, 
{\em A block variational procedure for the iterative diagonalization of
non-Hermitian random-phase approximation matrices}, 
J. Chem. Phys., 136(2012), article 034111.

\bibitem{Ruhe1983Numerical}
{\sc A. Ruhe}, 
{\em Numerical aspects of Gram-Schmidt orthogonalization of vectors}, 
Linear Algebra Appl., 52(1983), pp.~591--601.

\bibitem{Saad1982Lanczos}
{\sc Y. Saad},
{\em The Lanczos biorthogonalization algorithm and other oblique
projection methods for solving large unsymmetric systems}, 
SIAM J. Numer. Anal., 19(1982), pp.~485--506.

\bibitem{Saad2003Iterative}
{\sc Y. Saad},
{\em Iterative Methods for Sparse Linear Systems}, 
SIAM, 2003.

\bibitem{Saad2011Numerical}
{\sc Y. Saad},
{\em Numerical Methods for Large Eigenvalue Problems},
J. Soc. Ind. Appl. Math., 2011.

\bibitem{Salpeter1951relativistic}
{\sc E. E. Salpeter and H. A. Bethe}, 
{\em A relativistic equation for bound-state problems}, 
Phys. Rev., 84(1951), pp.~1232-1242.

\bibitem{Shao2018structure}
{\sc M. Shao, F. H. da Jornada, L. Lin, C. Yang, J. Deslippe and S. G. Louie}, 
{\em A structure preserving Lanczos algorithm for computing the optical
absorption spectrum}, 
SIAM J. Matrix Anal. Appl., 39(2018), pp.~683--711.

\bibitem{Shao2016Structure}
{\sc M. Shao, F. H. da Jornada, C. Yang, J. Deslippe and S. G. Louie}, 
{\em Structure preserving parallel algorithms for solving the
Bethe-Salpeter eigenvalue problem}, 
Linear Algebra Appl., 488(2016), pp.~148--167.

\bibitem{Stewart2008Block}
	{\sc G. W. Stewart}, 
{\em Block Gram-Schmidt orthogonalization}, 
SIAM J. Sci. Comput., 31(2008), pp.~761--775.

\bibitem{Strinati1988Application}
{\sc G. Strinati}, 
{\em Application of the Green's functions method to the study of the
optical properties of semiconductors}, 
Riv. del Nuovo Cim., 11(1988), pp.~1--86.

\bibitem{Tang2022spectrally}
{\sc Q. Tang, M. Xie, Y. Zhang and Y. Zhang}, 
{\em A spectrally accurate numerical method for computing the
Bogoliubov-de Gennes excitations of dipolar Bose-Einstein
condensates}, 
SIAM J. Sci. Comput., 44(2022), pp.~B100--B121.

\bibitem{Teng2013Convergence}
{\sc Z. Teng and R.C. Li}, 
{\em Convergence analysis of Lanczos-type methods for the linear
response eigenvalue problem}, 
J. Comput. Appl. Math., 247(2013), pp.~17--33.

\bibitem{Teng2019feast}
{\sc Z. Teng and L. Lu}, 
{\em A FEAST algorithm for the linear response eigenvalue problem}, 
Algorithms, 12(2019), pp.~181.

\bibitem{Teng2017block}
{\sc Z. Teng and L.H. Zhang}, 
{\em A block Lanczos method for the linear response eigenvalue problem}, 
Electron. Trans. Numer. Anal., 46(2017), pp.~505--523.

\bibitem{Teng2016block}
{\sc Z. Teng, Y. Zhou and R.C. Li}, 
{\em A block Chebyshev-Davidson method for linear response eigenvalue
problems}, 
Adv. Comput. Math., 42(2016), pp.~1103--1128.

\bibitem{Thouless1961Vibrational}
{\sc D.J. Thouless}, 
{\em Vibrational states of nuclei in the random phase approximation}, 
Nucl. Phys., 22(1961), pp.~78--95.

\bibitem{Thouless1972Quantum}
{\sc D.J. Thouless}, 
{\em The Quantum Mechanics of Many-Body Systems}, 
Academic, 1972.

\bibitem{Varga2000Matrix}
{\sc R.S Varga}, 
{\em Matrix Iterative Analysis}, 
Springer Science \& Business Media, 2000.

\bibitem{Vecharynski2017Efficient}
{\sc E. Vecharynski, J. Brabec, M. Shao, N. Govind and C. Yang}, 
{\em Efficient block preconditioned eigensolvers for linear response
time-dependent density functional theory}, 
Comput. Phys. Commun., 221(2017), pp.~42--52.

\bibitem{Zhang2020generalized}
{\sc N. Zhang, Y. Li, H. Xie, R. Xu and C. You}, 
{\em A generalized conjugate gradient method for eigenvalue problems}, 
Sci. Sin. Math., 51(2021), pp.~1297--1320.

\end{thebibliography}

\end{document}